\documentclass[11pt, a4paper]{amsart}
\usepackage[utf8]{inputenc}
\usepackage[a4paper,hmargin=22mm]{geometry}

\usepackage{dsfont}
\usepackage{polynom}
\usepackage{empheq}
\usepackage{subfiles}
\usepackage{systeme}
\usepackage{shuffle}
\usepackage{ytableau}
\usepackage{CJK}
\usepackage{bm}
\usepackage{faktor}
\usepackage{ifthen}
\usepackage{mathrsfs}
\usepackage{amsmath}
\usepackage{mathtools}
\usepackage{amssymb}
\usepackage{amscd}
\usepackage{appendix}
\usepackage{amsthm}
\usepackage{amsfonts}
\usepackage[all]{xy}
\usepackage{array}
\usepackage{enumerate}
\usepackage{graphicx}
\usepackage{hyperref}
\usepackage{longtable}
\usepackage[OT2,T1]{fontenc}
\usepackage{verbatim}
\usepackage{tikz}
\usetikzlibrary{matrix}
\usepackage{url,setspace,multirow,tikz-cd}
\DeclareSymbolFont{cyrletters}{OT2}{wncyr}{m}{n}
\DeclareMathSymbol{\Sha}{\mathalpha}{cyrletters}{"58}
\usepackage{xcolor}
\usepackage{textcomp}
\usepackage{manyfoot}
\usepackage{booktabs}
\usepackage{algorithm}
\usepackage{algorithmicx}
\usepackage{algpseudocode}
\usepackage{listings}
\usepackage{shuffle}
\usepackage{ytableau}
\usepackage{mathdots}

\theoremstyle{plain}
\newtheorem{thm}{Theorem}[section]

\newtheorem{prop}[thm]{Proposition}%
\newtheorem{cor}[thm]{Corollary}

\theoremstyle{definition}
\newtheorem{eg}[thm]{Example}
\newtheorem{rem}[thm]{Remark}

\newtheorem{defn}[thm]{Definition}

\numberwithin{equation}{section}

\newcommand{\ZZ}{{\mathbb Z}}
\newcommand{\QQ}{{\mathbb Q}}

\newcommand{\RR}{{\mathbb R}}
\newcommand{\CC}{{\mathbb C}}

\newcommand{\NN}{{\mathbb N}}

\newcommand{\PP}{{\mathbb P}}

\newcommand{\mfk}{\mathfrak{m}}

\newcommand{\Bfk}{\mathfrak{B}}

\newcommand{\Hfk}{\mathfrak{H}}

\newcommand{\Xfk}{\mathfrak{X}}

\newcommand{\Hcal}{\mathcal{H}}

\newcommand{\Ocal}{\mathcal{O}}
\newcommand{\Pcal}{\mathcal{P}}

\newcommand{\Scal}{\mathcal{S}}

\newcommand{\Dcal}{\mathcal{D}}

\newcommand{\Bcal}{\mathcal{B}}

\newcommand{\Frob}{\operatorname{Frob}}

\begin{document}

\title[$p$MLFs and TMBNs]{$p$-adic multiple $L$-functions and twisted multiple Bernoulli numbers}

\author{Ku-Yu Fan}

\address{Graduate School of Mathematics, Nagoya University, Furo-cho, Chikusa-ku, Nagoya, 464-8602, Japan.}
\email{ku-yu.fan.d2@math.nagoya-u.ac.jp }

\date{\today}


\begin{abstract}
We compute the special values ($p$MLFVs) of the $p$-adic multiple $L$-functions introduced by Furusho, Komori, Matsumoto, and Tsumura at tuples of positive integers. Furusho and Jarossay show that the special values can be expressed as an infinite sum of cyclotomic multiple harmonic values (CMHVs) with coefficients given by cyclotomic multiple Bernoulli numbers (CMBNs). We provide an explicit formula for CMBNs in terms of twisted multiple Bernoulli numbers (TMBNs), which are special values of generalized Euler–Zagier–Lerch type complex multiple zeta functions at tuples of non-positive integers. As a result, we obtain that these $p$MLFVs can be expressed as infinite sums of CMHVs, with coefficients given by the special values of the complex functions at tuples of non-positive integers.
\end{abstract}

\keywords{}


\maketitle

\tableofcontents

\section{Introduction}\label{section Introduction}
Furusho, Komori, Matsumoto, and Tsumura \cite{FKMTp-adic} constructed $p$-adic multiple $L$-functions ($p$MLFs, cf. Definition \ref{defn p-adic multiple L-function}) as multi-variable analogues of the Kubota-Leopoldt $p$-adic $L$-functions
$$L_p(s; \omega^k) \coloneqq \frac{1}{\langle c \rangle^{1-s}\omega^k(c) - 1} \int_{\ZZ_p^\times}\langle x \rangle^{-s} \omega^{k - 1}(x)d\widetilde{\mfk}_c(x).$$
In this paper, we study the special values of these $p$MLFs ($p$MLFVs) at tuples of positive integers. More precisely, for positive integers $n_1,\ldots,n_r$, we treat the family
\begin{equation}\label{eq pMLFVs}
  \left(p^{n_1+\cdots+n_r}L_{p,r}((n_i)_r;(\omega^{-n_i})_r;(1)_r;c)\right)_{p\in\Pcal_c}.
\end{equation}

Furusho and Jarossay \cite{CMBN} proved that the family in \eqref{eq pMLFVs} admits an explicit expansion (see Theorem \ref{thm CMBN}) as a convergent infinite series indexed by $\bm{l}=(l_i)_r\in\NN_0^r$, roots of unity $(\epsilon_i)_r\in(\mu_c\setminus\{1\})^r$, and combinatorial data $J\in E_r$ (cf.\ Definition \ref{defn J}). In their formula, the coefficients are given by cyclotomic multiple Bernoulli numbers (CMBNs, cf. Definition \ref{defn CMBN}), and the summands are expressed in terms of cyclotomic multiple harmonic values (CMHVs, cf. Definition \ref{defn cyclotomic multiple harmonic value}).

The purpose of this paper is to make the CMBN coefficients in this expansion more explicit. We employ the twisted Bernoulli numbers, which appear in the study of the desingularization of multivariable multiple zeta functions \cite{FKMTcomplex}, and which are defined as the coefficients of the exponential generating function
$$\frac{1}{1-\xi e^t}=\sum_{n=-1}^{\infty}\Bfk_n(\xi)\frac{t^n}{n!}, \qquad (\,(-1)! \coloneqq 1\,),$$
where $\xi$ is a root of unity. When $\xi=1$, they recover the classical Bernoulli numbers in the sense that $\Bfk_n(1) = -\frac{B_{n+1}}{n+1}$ for $n \geq 0$.

Their depth $r$ analogue is given by the twisted multiple Bernoulli numbers (TMBNs). For roots of unity $\xi_1,\ldots,\xi_r$ with $\xi_j\neq 1$ and parameters $\gamma_1,\ldots,\gamma_r \in \CC$, they are defined as the coefficients of the generating series
$$\prod_{j=1}^{r}\frac{1}{1-\xi_j \exp \left(\gamma_j \sum_{k=j}^{r}t_k \right)} = \sum_{n_1,\ldots,n_r \geq 0} \Bfk \left((n_j); (\xi_j); (\gamma_j)\right) \prod_{k=1}^{r} \frac{t_k^{n_k}}{n_k!}.$$
Furusho, Komori, Matsumoto, and Tsumura \cite{FKMTcomplex} further showed that these coefficients are given by special values of the generalized Euler-Zagier-Lerch type multiple zeta function
$$\zeta_r((s_j);(\xi_j);(\gamma_j)) = \sum_{m_1, \ldots, m_r> 0} \prod_{j=1}^{r}\xi_j^{m_j}(m_1\gamma_1+\cdots+m_j\gamma_j)^{-s_j},$$
by proving that $\zeta_r((s_j);(\xi_j);(\gamma_j))$ admits an analytic continuation to $\CC^r$ and that for $n_1,\ldots,n_r\in\NN_0$,
\begin{equation}\label{eq special value formula}
  \zeta_r((-n_j);(\xi_j);(\gamma_j))=(-1)^{r+n_1+\cdots+n_r}\Bfk\left((n_j);(\xi_j^{-1});(\gamma_j)\right).
\end{equation}

The main technical step of this paper is to connect the CMBNs appearing in the Furusho-Jarossay expansion with these TMBNs. In our first main result, Theorem \ref{thm explicit formula}, we prove an explicit formula expressing CMBNs in terms of TMBNs. The CMBNs appearing in the Furusho-Jarossay expansion
(see Theorem \ref{thm CMBN}) concern only a restricted family of the general CMBNs.  The explicit formula for this family is given in Corollary \ref{cor l = 0 formula}. Combining this formula with the special value formula \eqref{eq special value formula} above allows us to replace the CMBN coefficients in the Furusho-Jarossay expansion by special values of generalized Euler-Zagier-Lerch type multiple zeta functions.

Our second main result (Theorem \ref{thm Main thm}) of this paper is a reformulation of the expansion obtained by Furusho and Jarossay \cite{CMBN} (see Theorem \ref{thm CMBN}). We rewrite the family in \eqref{eq pMLFVs} as an infinite series indexed by the same data, in which the coefficients are expressed in terms of special values of the generalized Euler-Zagier-Lerch-type multiple zeta function at tuples of non-positive integers.
For example, in depth $1$, the expansion becomes
\begin{multline*}
  \left( p^{n}L_{p,1}(n; \omega^{-n}; 1; c)\right)_{p \in \Pcal_{c}} =  \\
  \sum_{l = 0}^{\infty} \sum_{\epsilon_1 \in \mu_{c} \setminus \{1\}} {-n \choose l} (-1)^{1 + l} \zeta_{1}\left(-l; \epsilon_1^{-1}; 1\right) \sum_{\delta \in \Delta(T_{1, (\emptyset, \{1\}, \emptyset)})}  \Hfk({\bf w}(l)_{\delta})^{\Frob^{-1}}
\end{multline*}
(cf. Corollary \ref{cor depth 1}).

This paper is organized as follows. In \S \ref{section Generalization of Bernoulli numbers}, we recall TMBNs and the CMBNs, and prove the explicit formula for CMBNs in terms of TMBNs. In particular, Corollary \ref{cor l = 0 formula} gives the coefficient formula used in the main theorem. In \S \ref{section Explicit formula for pMLFVs}, we recall the special value formula for the generalized Euler-Zagier-Lerch type multiple zeta function and the Furusho-Jarossay expansion of $p$MLFs. We then combine these results to prove Theorem \ref{thm Main thm}.

\section{Generalization of Bernoulli numbers}\label{section Generalization of Bernoulli numbers}
In this section, we recall two generalizations of Bernoulli numbers: TMBNs and the CMBNs. We then prove an explicit formula which describes CMBNs in terms of TMBNs in Theorem \ref{thm explicit formula}.
\begin{defn}[{\cite[p. 456]{Koblitz}}]\label{defn TBN}
  For any root of unity $\xi$, we define the {\it twisted Bernoulli numbers} $\Bfk_n(\xi)$ for $n \in \ZZ_{\geq -1}$ by
  $$\Hcal(t; \xi) = \frac{1}{1-\xi e^t} = \sum_{n = -1}^{\infty} \Bfk_n(\xi) \frac{t^n}{n!},$$
  where we formally let $(-1)! = 1$.
\end{defn}

\begin{eg}[{\cite[(1.3), (1.4)]{FKMTcomplex}}]\label{eg TBN}
  In the case $\xi = 1$, we have
  $$\Bfk_{-1}(1) = -1, \  \Bfk_{n}(1) = -\frac{B_{n+1}}{n+1}\ (n \in \NN_{0}),$$
  where $B_{n}$ denotes the $n$-th Bernoulli number with the convention $B_1 = -\frac{1}{2}$, defined by
  $$\frac{t}{e^t - 1} = \sum_{n = 0}^{\infty} B_{n} \frac{t^n}{n!}.$$
  In the case $\xi \neq 1$, we have
  $$\Bfk_{0}(\xi) = \frac{1}{1-\xi}, \ \Bfk_{1}(\xi) = \frac{\xi}{(1-\xi)^2}, \ \Bfk_{2}(\xi) = \frac{\xi(\xi+1)}{(1-\xi)^3},$$
  $$\Bfk_{3}(\xi) = \frac{\xi(\xi^2+4\xi+1)}{(1-\xi)^4}, \ \Bfk_{4}(\xi) = \frac{\xi(\xi^3+11\xi^2+11\xi+1)}{(1-\xi)^5}, \ldots$$
\end{eg}

\begin{defn}[{\cite[Definition 1.5]{FKMTcomplex}}]\label{defn TMBN}
  Let $r \in \NN$, $\gamma_{1}, \ldots, \gamma_{r} \in \CC$ and let $\xi_1, \ldots, \xi_r \in \CC \setminus \{1\}$ be roots of unity. Set
  $$\Hcal_{r}((t_j); (\xi_j); (\gamma_j)) \coloneqq \prod_{j = 1}^{r}\Hcal \left(\gamma_j \left( \sum_{k = j}^{r} t_k \right); \xi_j \right) = \prod_{j = 1}^{r}\frac{1}{1-\xi_j \mathrm{exp}(\gamma_j \sum_{k = j}^{r} t_k)}$$
  and define the {\it twisted multiple Bernoulli numbers} (TMBNs) $\Bfk((n_j); (\xi_j); (\gamma_j))$ for $(n_j) \in \ZZ_{\geq 0}^r$ by
  $$\Hcal_{r}((t_j); (\xi_j); (\gamma_j)) = \sum_{n_1 = 0}^{\infty} \cdots \sum_{n_r = 0}^{\infty} \Bfk((n_j); (\xi_j); (\gamma_j)) \frac{t_{1}^{n_{1}}}{n_{1}!} \cdots \frac{t_{r}^{n_{r}}}{n_{r}!}.$$
\end{defn}

\begin{rem}\label{rem TMBN}
  In the case $r = 1$, we have $\Bfk(n; \xi; 1) = \Bfk_{n}(\xi)$. In the general case, we use the notation $\Bfk_{(n_j)_r}((\xi_j)_r) \coloneqq \Bfk((n_j)_r; (\xi_j)_r; (1)_r)$ when $\gamma_1 = \cdots = \gamma_r = 1$.
\end{rem}

\begin{defn}
  Let $c \in \ZZ_{p}^{\times}\cap\NN$ with $c\geq 2$. Put $(l_{i})_{r}=(l_{1},\ldots,l_{r})\in \NN_{0}^{r}$ and $(\epsilon_i)_r=(\epsilon_{1},\ldots,\epsilon_{r})\in\mu_c^r$. For $h \in \NN$, and $(\kappa_{i})_{r-1}=(\kappa_{1},\dots,\kappa_{r-1})\in \NN_{0}^{r-1}$, we define the {\it cyclotomic multiple harmonic sums with modified $(\kappa_{i})_{r-1}$-steps} by
  \begin{equation}\label{eq Scal}
    \Scal_{(\kappa_{i})_{r-1},h}((l_{i})_{r};(\epsilon_{i})_{r}) = \sum_{\substack{(u_{1},\ldots,u_{r}) \in \NN^r_{0},\ u_{1} < h \\ \forall i \geq 2,\ u_{i-1}+\kappa_{i-1} h < u_{i} \\ \forall i \geq 2,\ u_{i}<u_{i-1}+(\kappa_{i-1}+1)h}} \left( \frac{\epsilon_{2}}{\epsilon_{1}}\right)^{u_{1}} \cdots \left(\frac{1}{\epsilon_{r}}\right)^{u_{r}} u_{1}^{l_{1}} \cdots u_{r}^{l_{r}}\in \QQ(\mu_{c}),
  \end{equation}
  where we formally let $0^0 = 1$.
\end{defn}

\begin{defn}[{\cite[Lemma 6]{CMBN}}]\label{defn CMBN}
  Let $c\in \ZZ^{\times}_{p} \cap \NN, c\geq 2$. For any $(l_{i})_{r} \in \NN_{0}^{r}$, $(\epsilon_{i})_{r}\in \mu_{c}^{r}$, and $(\kappa_{i})_{r-1} \in \NN_{0}^{r-1}$, we define the {\it cyclotomic multiple Bernoulli numbers} (CMBNs) with modified $(\kappa_{i})_{r-1}$-steps $\Bcal_{l,\xi}^{((l_{i})_{r};(\epsilon_{i})_{r};(\kappa_{i})_{r-1})}$ for $l \in \{0, \ldots, l_{1}+\cdots +l_{r}+r\}$ and $\xi \in \mu_{c}$ to be the elements in $\QQ(\mu_{c})$ such that, for all $h \in \NN$, we have
  \begin{equation*}
    \Scal_{(\kappa_{i})_{r-1},h}((l_{i})_{r};(\epsilon_{i})_{r}) = \sum_{\substack{0 \leq l \leq  l_{1}+\cdots+l_{r}+r \\ \xi \in \mu_{c}}}\Bcal_{l,\xi}^{((l_{i})_{r};(\epsilon_{i})_{r};(\kappa_{i})_{r-1})} h^{l} \xi^{h}.
  \end{equation*}
\end{defn}

\begin{thm}\label{thm explicit formula}
  Let $c \in \ZZ_{p}^{\times}\cap\NN$, $c\geq 2$. Put $(l_{i})_{r}=(l_{1},\ldots,l_{r})\in \NN_{0}^{r}$ and $(\epsilon_{1},\ldots,\epsilon_{r})\in\mu_c^r$. Let $(\kappa_{i})_{r-1}=(\kappa_{1},\dots,\kappa_{r-1})\in \NN_{0}^{r-1}$ and set $\kappa_{0} = 0$. Then, the following formula
  \begin{align*}
    & \Bcal_{l,\xi}^{((l_{i})_{r};(\epsilon_{i})_{r};(\kappa_{i})_{r-1})} \\
    = & \sum_{\mathbf{e} = (e_i)_r \in \{0\}\times \{0, 1\}^{r-1}} \sum_{\substack{\forall 1 \leq i \leq j \leq r \ m_{\mathbf{e},i,j}\in \NN_0 \\ m_{\mathbf{e},1,1} \leq l_1 \\ \cdots \\ m_{\mathbf{e},1,r} + \cdots + m_{\mathbf{e},r,r} \leq l_r}} \sum_{i_1 = 0}^{1-e_1} \cdots \sum_{i_r = 0}^{1-e_r} (-1)^{\sum_{k = 1}^{r} (e_k + i_k)} \prod_{k = 1}^{r} (\kappa_{k-1} + i_k)^{\sum_{j = k}^{r}m _{\mathbf{e}, k, j}} \\
    & \prod_{k = 1}^{r} \binom{l_{k}}{m_{\mathbf{e},1,k}, \ldots, m_{\mathbf{e},k,k}, l_{k} - \left( \sum_{i = 1}^{k} m_{\mathbf{e},i,k} \right)} \delta_{\xi, \prod_{k = 1}^{r} \epsilon_{k}^{-(\kappa_{k-1} + i_k)}} \delta_{l, \sum_{k = 1}^{r} \sum_{j = k}^{r}m _{\mathbf{e}, k, j}} \\
    & \Bfk_{\left(l_{1} - m_{\mathbf{e},1,1} \circ_{e_2} \cdots \circ_{e_r} l_{r} - \left( \sum_{i = 1}^{r} m_{\mathbf{e},i,r} \right)\right)} (\epsilon_1^{-1} (1 - e_1) \circ_{e_2} \cdots \circ_{e_r} \epsilon_r^{-1} (1 - e_r))
  \end{align*}
  holds for every $l, \xi$, where $\circ_0 = ,$ and $\circ_1 = +$.
\end{thm}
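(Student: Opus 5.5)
The plan is to compute $\Scal_{(\kappa_i)_{r-1},h}((l_i)_r;(\epsilon_i)_r)$ directly as a function of $h$, and to put it in the shape $\sum_{l,\xi}(\cdot)h^l\xi^h$. Uniqueness of the coefficients then forces them to be the CMBNs. Uniqueness holds because the functions $h\mapsto h^l\xi^h$, for $l\ge0$ and $\xi\in\mu_c$, are linearly independent on $\NN$.

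\textbf{Step 1: change of variables.} I first reparametrize the summation domain. Put $u_i=u_{i-1}+\kappa_{i-1}h+v_i$ for $i\ge2$, where $0<v_i<h$, and put $u_1=v_1$ with $0\le v_1<h$. Then
\[
u_k=\sum_{j\le k}v_j+\Big(\sum_{j<k}\kappa_j\Big)h .
\]
The phase factor telescopes: $\prod_k(\epsilon_{k+1}/\epsilon_k)^{u_k}$ (with $\epsilon_{r+1}=1$) becomes $\prod_k\epsilon_k^{-(u_k-u_{k-1})}=\prod_k\epsilon_k^{-(v_k+\kappa_{k-1}h)}$. This already produces the factor $\prod_k\epsilon_k^{-\kappa_{k-1}h}$ contributing to $\xi^h$.

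Next I expand each monomial $u_k^{l_k}$ by the multinomial theorem. Each $u_k$ is a sum of the terms $v_i+\kappa_{i-1}h$ for $i\le k$, so the expansion splits it into pieces of exponents $m_{\cdot,1,k},\dots,m_{\cdot,k,k}$ and a remainder; this matches the multinomial coefficients in the statement. The pieces are then regrouped so that each $v_k$ carries the combined power $\sum_{j\ge k}m_{k,j}$.

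\textbf{Step 2: the one-variable sums.} What remains is a product of independent geometric-polynomial sums $\sum_{0<v<h}\epsilon^{-v}v^a$, or $\sum_{0\le v<h}$ when $k=1$. I evaluate each through the generating function $\Hcal$. The identity
\[
\sum_{0\le v<h}\epsilon^{-v}e^{vt}=\Hcal(t;\epsilon^{-1})-\epsilon^{-h}e^{ht}\,\Hcal(t;\epsilon^{-1})
\]
holds for $\epsilon\neq1$ and gives a lower-boundary term and an upper-boundary term. Writing $e^{ht}=\sum_{i}\frac{(ht)^i}{i!}$ produces the factors $(\kappa_{k-1}+i_k)$ with $i_k\in\{0,1\}$, the sign $(-1)^{i_k}$, and the contribution $\epsilon_k^{-i_kh}$ to $\xi$. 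The shift from $0\le v$ to $0<v$ for $k\ge2$ is the source of $e_k$.

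\textbf{Step 3: reinterpreting $e_k$ (the main obstacle).} The hard part is to see that the index $e_k\in\{0,1\}$ and the notation $\circ_{e_k}$ correctly encode the merging. I would not evaluate the $v$-sums one variable at a time. Instead I would work with the full multivariable exponential generating function
\[
\sum_{v}\prod_k\epsilon_k^{-v_k}e^{t_ku_k},
\]
and compare it with $\Hcal_r((t_j);(\epsilon_j^{-1});(1))$ of Definition \ref{defn TMBN}, whose arguments $\sum_{k\ge j}t_k$ are exactly the partial sums that arise. In this picture the choice $e_k=1$ corresponds to deleting the $k$-th geometric factor. The corresponding variable then merges with its neighbour, which is what $\circ_1=+$ records on the index $n$ of $\Bfk$, and the entry $\epsilon_k^{-1}(1-e_k)=0$ kills that slot. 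The sign $(-1)^{e_k}$ comes from the same deletion. The reason is the decomposition
\[
\sum_{0<v<h}=\sum_{0\le v<h}-[v=0],
\]
and the correction term $[v=0]$ should be absorbed as the "degenerate" factor with parameter $0$, using $\Hcal(t;0)=1$.

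I would check this step carefully in depth $r=1,2$ against Example \ref{eg TBN} before trusting the general bookkeeping. After that, extracting coefficients of $\prod t_k^{n_k}/n_k!$ and collecting $h^l\xi^h$ yields the Kronecker deltas and completes the proof. The one remaining issue is $\epsilon_k=1$, where the geometric sum has no such closed form. There I would treat the formula formally, checking that the convention makes the corresponding TMBN entries consistent, or restrict attention as the statement implicitly requires.
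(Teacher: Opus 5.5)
Your plan is correct and is essentially the paper's proof. The paper also works with the exponential generating function of the $\Scal_{(\kappa_i),h}$ and sums the geometric series in $u_r,u_{r-1},\dots,u_1$ one after another; your increment substitution does this in one stroke, since $\sum_k t_ku_k=\sum_i(v_i+\kappa_{i-1}h)\sum_{j\ge i}t_j$. From there the paper does what you describe: it splits each factor into the two boundary terms minus the $v_k=0$ term for $k\ge2$, expands the product over $\mathbf e$, identifies the surviving denominators with a merged $\Hcal_{r-\sum e_k}$ (your $\Hcal(t;0)=1$ reading), and compares coefficients of $\prod_k t_k^{l_k}/l_k!$ and of $h^l\xi^h$.

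Two minor points. First, the index bookkeeping in your Step 1 is off. In the statement, $\sum_{j\ge k}m_{\mathbf e,k,j}$ is the exponent of $(\kappa_{k-1}+i_k)h$ coming from the boundary factor $\exp\bigl((\kappa_{k-1}+i_k)h\sum_{j\ge k}t_j\bigr)$, and $l_k-\sum_{i\le k}m_{\mathbf e,i,k}$ is the index fed into the TMBN; neither is a power of $v_k$. Your Step 3 generating-function route produces this correctly on its own. Second, your caveat about $\epsilon_k=1$ is justified: the formula really needs every $\epsilon_k\neq1$, a hypothesis the paper uses only tacitly. For instance, $r=1$, $\epsilon_1=1$, $l_1=0$ gives $\Scal_h=h$, which the right-hand side cannot produce.
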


\begin{proof}
  Consider the following generating function, whose coefficients are $\Scal_{(\kappa_{i})_{r-1},h}((l_{i})_{r};(\epsilon_{i})_{r})$
  $$F(t_1, \ldots, t_r) = \sum_{l_{1}, \ldots, l_{r} = 0}^{\infty} \Scal_{(\kappa_{i})_{r-1},h}((l_{i})_{r};(\epsilon_{i})_{r}) \prod_{k = 1}^{r} \frac{t_{k}^{l_{k}}}{l_{k}!}$$
  For convenience, we let $\epsilon_k^{-1} = \xi_k$, $\epsilon_{r+1}=1$, $\kappa_{0} = 0$. Then we have
  $$F(t_1, \ldots, t_r) = \sum_{l_{1}, \ldots, l_{r} = 0}^{\infty} \left[ \sum_{\substack{(u_{1},\ldots,u_{r}) \in \NN_{0}^r,\ u_{1} < h \\ \forall i \geq 2,\ u_{i-1}+\kappa_{i-1} h<u_{i} \\ \forall i \geq 2,\ u_{i}<u_{i-1}+(\kappa_{i-1}+1)h}} \prod_{k = 1}^{r} \left( \frac{\epsilon_{k+1}}{\epsilon_{k}}\right)^{u_{k}} u_{k}^{l_{k}} \right] \prod_{k = 1}^{r} \frac{t_{k}^{l_{k}}}{l_{k}!}.$$
  We swapped the sum and the product to get
  $$F(t_1, \ldots, t_r) = \sum_{\substack{(u_{1},\ldots,u_{r}) \in \NN_{0}^r,\ u_{1} < h \\ \forall i \geq 2,\ u_{i-1}+\kappa_{i-1} h<u_{i} \\ \forall i \geq 2,\ u_{i}<u_{i-1}+(\kappa_{i-1}+1)h}} \prod_{k = 1}^{r} \left[ \left( \frac{\epsilon_{k+1}}{\epsilon_{k}}\right)^{u_{k}} \sum_{l_k = 0}^{\infty} \frac{(u_{k}t_{k})^{l_{k}}}{l_{k}!} \right].$$
  We use the expansion of the exponential function to obtain
  $$F(t_1, \ldots, t_r) = \sum_{\substack{(u_{1},\ldots,u_{r}) \in \NN_{0}^r,\ u_{1} < h \\ \forall i \geq 2,\ u_{i-1}+\kappa_{i-1} h<u_{i} \\ \forall i \geq 2,\ u_{i}<u_{i-1}+(\kappa_{i-1}+1)h}} \prod_{k = 1}^{r} \left( \frac{\epsilon_{k+1}}{\epsilon_{k}} \mathrm{exp}(t_{k}) \right)^{u_{k}}.$$
  We expand the product of the $r$-th term to obtain
  \begin{multline*}
    F(t_1, \ldots, t_r) =  \\
    \sum_{\substack{(u_{1},\ldots,u_{r-1}) \in \NN_{0}^{r-1},\ u_{1} < h \\ \forall i \geq 2,\ u_{i-1}+\kappa_{i-1} h<u_{i} \\ \forall i \geq 2,\ u_{i}<u_{i-1}+(\kappa_{i-1}+1)h}} \prod_{k = 1}^{r-1} \left( \frac{\epsilon_{k+1}}{\epsilon_{k}} \mathrm{exp}(t_{k}) \right)^{u_{k}} \\
    \times \left( \sum_{u_r = u_{r-1}+\kappa_{r-1} h}^{u_{r-1}+(\kappa_{r-1}+1)h-1} \left( \xi_r \mathrm{exp}(t_{r}) \right)^{u_{r}} - \left( \xi_r \mathrm{exp}(t_{r}) \right)^{u_{r-1}+\kappa_{r-1} h} \right).
  \end{multline*}
  For convenience, we let
  $$E_{k, e} = \left(\xi_{k}\mathrm{exp}\left(\sum_{j = k}^{r} t_{j}\right)\right)^{(\kappa_{k-1}+e)h}.$$
  Then we have
  \begin{multline*}
    F(t_1, \ldots, t_r) = \\
    \sum_{\substack{(u_{1},\ldots,u_{r-1}) \in \NN_{0}^{r-1},\ u_{1} < h \\ \forall i \geq 2,\ u_{i-1}+\kappa_{i-1} h<u_{i} \\ \forall i \geq 2,\ u_{i}<u_{i-1}+(\kappa_{i-1}+1)h}} \prod_{k = 1}^{r-1} \left( \frac{\epsilon_{k+1}}{\epsilon_{k}} \mathrm{exp}(t_{k}) \right)^{u_{k}} \\
    \times \left( \frac{\left(\xi_{r}\mathrm{exp}(t_{r})\right)^{u_{r-1}}(E_{r, 0} - E_{r, 1})}{1 - \xi_{r}\mathrm{exp}(t_{r})}  - \left( \xi_r \mathrm{exp}(t_{r}) \right)^{u_{r-1}}E_{r, 0} \right).
  \end{multline*}
  We combine the $(r-1)$-th terms of the product to get
  \begin{multline*}
    F(t_1, \ldots, t_r) = \\
    \sum_{\substack{(u_{1},\ldots,u_{r-1}) \in \NN_{0}^{r-1},\ u_{1} < h \\ \forall i \geq 2,\ u_{i-1}+\kappa_{i-1} h<u_{i} \\ \forall i \geq 2,\ u_{i}<u_{i-1}+(\kappa_{i-1}+1)h}} \prod_{k = 1}^{r-2} \left( \frac{\epsilon_{k+1}}{\epsilon_{k}} \mathrm{exp}(t_{k}) \right)^{u_{k}} \\
    \times \left( \xi_{r-1} \mathrm{exp}(t_{r-1} + t_r) \right)^{u_{r-1}} \left( \frac{E_{r, 0} - E_{r, 1}}{1 - \xi_{r}\mathrm{exp}(t_{r})}  - E_{r, 0} \right).
  \end{multline*}
  We repeat this process until we get the first term to obtain
  $$F(t_1, \ldots, t_r) = \sum_{u_1 = 0}^{h - 1} \left(\xi_{1}\mathrm{exp}\left(\sum_{j = 1}^{r} t_{j}\right)\right)^{u_1} \prod_{k = 2}^{r} \left( \frac{E_{k, 0} - E_{k, 1}}{1 - \xi_{k}\mathrm{exp}\left(\sum_{j = k}^{r} t_{j}\right)} - E_{k, 0} \right).$$
  We also expand the first item to obtain
  $$F(t_1, \ldots, t_r) = \frac{E_{1, 0} - E_{1, 1}}{1 - \xi_{1}\mathrm{exp}\left(\sum_{j = 1}^{r} t_{j}\right)} \prod_{k = 2}^{r} \left( \frac{E_{k, 0} - E_{k, 1}}{1 - \xi_{k}\mathrm{exp}\left(\sum_{j = k}^{r} t_{j}\right)} - E_{k, 0} \right).$$
  We use index $(e_i)_r$ to rewrite the product as a sum to obtain
  $$F(t_1, \ldots, t_r) = \sum_{(e_i)_r \in \{0\}\times \{0, 1\}^{r-1}} \prod_{k = 1}^r \left(\frac{E_{k, 0} - E_{k, 1}}{1 - \xi_{k}\mathrm{exp}\left(\sum_{j = k}^{r} t_{j}\right)}\right)^{1-e_k} (- E_{k, 0})^{e_k}.$$
  For $(e_i)_r \in \{0\}\times \{0, 1\}^{r-1}$, we let $\circ_0 = ,$ and $\circ_1 = +$. Then
  \begin{align*}
    & \prod_{k = 1}^r \left(\frac{1}{1 - \xi_{k}\mathrm{exp}\left(\sum_{j = k}^{r} t_{j}\right)}\right)^{1-e_k} \\
    = & \Hcal_{r-\sum_{k = 1}^{r}e_k}((t_1 \circ_{e_2} \cdots \circ_{e_r} t_r); (\xi_1 (1 - e_1) \circ_{e_2} \cdots \circ_{e_r} \xi_r (1 - e_r)); ((1 - e_1) \circ_{e_2} \cdots \circ_{e_r} (1 - e_r))) \\
    = & \sum_{n_1, \ldots, n_r = 0}^{\infty} \Bfk_{(n_1 \circ_{e_2} \cdots \circ_{e_r} n_r)}(\xi_1 (1 - e_1) \circ_{e_2} \cdots \circ_{e_r} \xi_r (1 - e_r)) \prod_{k = 1}^{r} \frac{t_{k}^{n_{k}}}{n_{k}!}.
  \end{align*}
  We rewrite $E_{k, e}$ as the series. Then
  \begin{align*}
    - E_{k, 0} = & - \xi_{k}^{\kappa_{k-1} h} \sum_{n = 0}^{\infty}\frac{\left((\kappa_{k-1} h)\sum_{j = k}^{r} t_{j}\right)^n}{n!} \\
    = & \sum_{n = 0}^{\infty} (- \xi_{k}^{\kappa_{k-1} h})(\kappa_{k-1} h)^n \frac{\left(\sum_{j = k}^{r} t_{j}\right)^n}{n!} \\
    = & \sum_{n_k, \ldots, n_r = 0}^{\infty} (- \xi_{k}^{\kappa_{k-1} h})(\kappa_{k-1} h)^{\sum_{j = k}^{r} n_j} \prod_{j = k}^{r}\frac{t_j^{n_j}}{{n_j}!}
  \end{align*}
  and
  \begin{align*}
    E_{k, 0} - E_{k, 1} = & \sum_{n = 0}^{\infty}\frac{\left(\xi_{k}^{\kappa_{k-1} h} (\kappa_{k-1} h)^n - \xi_{k}^{(\kappa_{k-1} + 1) h} \left((\kappa_{k-1} + 1) h\right)^n\right)\left(\sum_{j = k}^{r} t_{j}\right)^n}{n!} \\
    = & \sum_{n_k, \ldots, n_r = 0}^{\infty} \left(\xi_{k}^{\kappa_{k-1} h} (\kappa_{k-1} h)^{\sum_{j = k}^{r} n_j} - \xi_{k}^{(\kappa_{k-1} + 1) h} \left((\kappa_{k-1} + 1) h\right)^{\sum_{j = k}^{r} n_j}\right) \prod_{j = k}^{r}\frac{t_j^{n_j}}{{n_j}!}.
  \end{align*}
  To obtain $\Scal_{(\kappa_{i})_{r-1},h}((l_{i})_{r};(\epsilon_{i})_{r})$, we consider the coefficient of $\prod_{k = 1}^{r} \frac{t_{k}^{l_{k}}}{l_{k}!}$
  \begin{align*}
    & \Scal_{(\kappa_{i})_{r-1},h}((l_{i})_{r};(\epsilon_{i})_{r}) \prod_{k = 1}^{r} \frac{t_{k}^{l_{k}}}{l_{k}!} \\
    = & \sum_{\mathbf{e} = (e_i)_r \in \{0\}\times \{0, 1\}^{r-1}} \sum_{\substack{m_{\mathbf{e},i,j}\in \NN_0 \\ m_{\mathbf{e},1,1} \leq l_1 \\ \cdots \\ m_{\mathbf{e},1,r} + \cdots + m_{\mathbf{e},r,r} \leq l_r}} \\
    & \Bfk_{\left(l_{1} - m_{\mathbf{e},1,1} \circ_{e_2} \cdots \circ_{e_r} l_{r} - \left( \sum_{i = 1}^{r} m_{\mathbf{e},i,r} \right)\right)} (\xi_1 (1 - e_1) \circ_{e_2} \cdots \circ_{e_r} \xi_r (1 - e_r)) \\
    & \prod_{k = 1}^{r} \frac{t_{k}^{l_{k} - (m_{\mathbf{e},1,k} + \cdots + m_{\mathbf{e},k,k})}}{(l_{k} - (m_{\mathbf{e},1,k} + \cdots + m_{\mathbf{e},k,k}))!} \\
    & \prod_{k = 1}^{r} \left( \xi_{k}^{\kappa_{k-1} h} (\kappa_{k - 1}h)^{\sum_{j = k}^{r} m_{\mathbf{e},k, j}} - \xi_{k}^{(\kappa_{k-1} + 1)h} ((\kappa_{k - 1} + 1)h)^{\sum_{j = k}^{r} m_{\mathbf{e},k, j}} \right)^{1-e_k} \\
    & \left((- \xi_{k}^{\kappa_{k-1} h})(\kappa_{k-1} h)^{\sum_{j = k}^{r} m_{\mathbf{e},k, j}}\right)^{e_k} \prod_{j = k}^{r} \frac{t_{j}^{m_{\mathbf{e},k,j}}}{m_{\mathbf{e},k, j}!}.
  \end{align*}
  Then we obtain $\Scal_{(\kappa_{i})_{r-1},h}((l_{i})_{r};(\epsilon_{i})_{r})$ by comparing the coefficient
  \begin{align*}
    & \Scal_{(\kappa_{i})_{r-1},h}((l_{i})_{r};(\epsilon_{i})_{r}) \prod_{k = 1}^{r} \frac{t_{k}^{l_{k}}}{l_{k}!} \\
    = & \sum_{\mathbf{e} = (e_i)_r \in \{0\}\times \{0, 1\}^{r-1}} \sum_{\substack{m_{\mathbf{e},i,j}\in \NN_0 \\ m_{\mathbf{e},1,1} \leq l_1 \\ \cdots \\ m_{\mathbf{e},1,r} + \cdots + m_{\mathbf{e},r,r} \leq l_r}} \\
    & \Bfk_{\left(l_{1} - m_{\mathbf{e},1,1} \circ_{e_2} \cdots \circ_{e_r} l_{r} - \left( \sum_{i = 1}^{r} m_{\mathbf{e},i,r} \right)\right)} (\xi_1 (1 - e_1) \circ_{e_2} \cdots \circ_{e_r} \xi_r (1 - e_r)) \\
    & \prod_{k = 1}^{r} \left( \xi_{k}^{\kappa_{k-1} h} (\kappa_{k - 1}h)^{\sum_{j = k}^{r} m_{\mathbf{e},k, j}} - \xi_{k}^{(\kappa_{k-1} + 1)h} ((\kappa_{k - 1} + 1)h)^{\sum_{j = k}^{r} m_{\mathbf{e},k, j}} \right)^{1-e_k} \\
    & \left((- \xi_{k}^{\kappa_{k-1} h})(\kappa_{k-1} h)^{\sum_{j = k}^{r} m_{\mathbf{e},k, j}}\right)^{e_k} \prod_{k = 1}^{r} \binom{l_{k}}{m_{\mathbf{e},1,k}, \ldots, m_{\mathbf{e},k,k}, l_{k} - (m_{\mathbf{e},1,k} + \cdots + m_{\mathbf{e},k,k})} \frac{t_{k}^{l_{k}}}{l_{k}!}.
  \end{align*}
  For $\mathbf{e} = (e_i)_r \in \{0\}\times \{0, 1\}^{r-1}$, we let $m_{\mathbf{e}, k} = \sum_{j = k}^{r}m _{\mathbf{e}, k, j}$, and we calculate
  \begin{align*}
    & \prod_{k = 1}^{r} \left( \xi_{k}^{\kappa_{k-1} h} (\kappa_{k - 1}h)^{m_{\mathbf{e}, k}} - \xi_{k}^{(\kappa_{k-1} + 1)h} ((\kappa_{k - 1} + 1)h)^{m_{\mathbf{e}, k}} \right)^{1-e_k} \left((- \xi_{k}^{\kappa_{k-1} h})(\kappa_{k-1} h)^{m_{\mathbf{e}, k}}\right)^{e_k} \\
    = & \sum_{i_1 = 0}^{1-e_1} \cdots \sum_{i_r = 0}^{1-e_r} (-1)^{\sum_{k = 1}^{r} (e_k + i_k)} \prod_{k = 1}^{r} \xi_{k}^{(\kappa_{k-1} + i_k) h} ((\kappa_{k-1} + i_k) h)^{m_{\mathbf{e}, k}}.
  \end{align*}
  By comparing the coefficient, we obtain
  \begin{align*}
    & \Bcal_{l,\xi}^{((l_{i})_{r};(\epsilon_{i})_{r};(\kappa_{i})_{r-1})} \\
    = & \sum_{\mathbf{e} = (e_i)_r \in \{0\}\times \{0, 1\}^{r-1}} \sum_{\substack{m_{\mathbf{e},i,j}\in \NN_0 \\ m_{\mathbf{e},1,1} \leq l_1 \\ \cdots \\ m_{\mathbf{e},1,r} + \cdots + m_{\mathbf{e},r,r} \leq l_r}} \sum_{i_1 = 0}^{1-e_1} \cdots \sum_{i_r = 0}^{1-e_r} (-1)^{\sum_{k = 1}^{r} (e_k + i_k)} \prod_{k = 1}^{r} (\kappa_{k-1} + i_k)^{m_{\mathbf{e}, k}}\\
    & \Bfk_{\left(l_{1} - m_{\mathbf{e},1,1} \circ_{e_2} \cdots \circ_{e_r} l_{r} - \left( \sum_{i = 1}^{r} m_{\mathbf{e},i,r} \right)\right)} (\xi_1 (1 - e_1) \circ_{e_2} \cdots \circ_{e_r} \xi_r (1 - e_r)) \\
    & \prod_{k = 1}^{r} \binom{l_{k}}{m_{\mathbf{e},1,k}, \ldots, m_{\mathbf{e},k,k}, l_{k} - (m_{\mathbf{e},1,k} + \cdots + m_{\mathbf{e},k,k})} \delta_{\xi, \prod_{k = 1}^{r} \xi_{k}^{(\kappa_{k-1} + i_k)}} \delta_{l, \sum_{k = 1}^{r} m_{\mathbf{e}, k}}.
  \end{align*}
  This proves the theorem.
\end{proof}

The following corollary is the explicit formula for $r = 1$.

\begin{cor}\label{cor r = 1 formula}
  Let $c \in \ZZ_{p}^{\times}\cap\NN$ with $c\geq 2$. Let $l_{1} \in \NN_{0}$ be a non-negative integer and $\epsilon_1 \in \mu_{c}$. Then, the following formula
  $$\Bcal_{l,\xi}^{(l_{1};\epsilon_1;\emptyset)} = \Bfk_{l_{1}}(\epsilon_1^{-1}) \delta_{\xi,1} \delta_{l, 0} - \sum_{m = 0}^{l_{1}} \binom{l_{1}}{m} \Bfk_{l_{1} - m}(\epsilon_1^{-1}) \delta_{\xi, \epsilon_1^{-1}} \delta_{l, m}$$
  holds for every $l, \xi$.
\end{cor}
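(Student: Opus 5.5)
The plan is to specialize Theorem \ref{thm explicit formula} to $r=1$. With $r=1$ the index set $\{0\}\times\{0,1\}^{r-1}$ has the single element $\mathbf{e}=(e_1)=(0)$. The tuple $(\kappa_i)_{r-1}$ is empty, so only $\kappa_0=0$ occurs. The summation over $m_{\mathbf{e},i,j}$ reduces to one variable $m=m_{\mathbf{e},1,1}$ with $0\le m\le l_1$, and the inner sum is over $i_1\in\{0,1\}$.

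Next I evaluate each factor in this specialization. The sign is $(-1)^{e_1+i_1}=(-1)^{i_1}$. The power factor is $(\kappa_0+i_1)^{m}=i_1^{m}$, read with the convention $0^0=1$ from the definition of $\Scal$. The multinomial $\binom{l_1}{m,\,l_1-m}$ is just $\binom{l_1}{m}$. The Kronecker deltas are $\delta_{\xi,\epsilon_1^{-i_1}}$ and $\delta_{l,m}$. The Bernoulli factor is $\Bfk_{l_1-m}(\epsilon_1^{-1})$, which by Remark \ref{rem TMBN} is the depth-one twisted Bernoulli number.

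Now I split by $i_1$. For $i_1=0$ the factor $0^m$ vanishes unless $m=0$, so only $m=0$ survives. That term is $\Bfk_{l_1}(\epsilon_1^{-1})\delta_{\xi,1}\delta_{l,0}$. For $i_1=1$ the factor $1^m=1$, and every $m$ from $0$ to $l_1$ contributes. These terms give $-\sum_{m=0}^{l_1}\binom{l_1}{m}\Bfk_{l_1-m}(\epsilon_1^{-1})\delta_{\xi,\epsilon_1^{-1}}\delta_{l,m}$. Adding the two cases yields the stated formula.

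The only delicate point is the $0^0=1$ convention in the $i_1=0$ branch. It must be read exactly as in the proof of Theorem \ref{thm explicit formula}, where it arises from $(\kappa_{k-1}h)^{n}$ with $n=0$. Under that reading the $m=0$ term is correct.

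A second subtlety is $\epsilon_1=1$, where $\Bfk_n(1)$ must be taken in the sense of Definition \ref{defn TBN}. The derivation in Theorem \ref{thm explicit formula} divides by $1-\xi_1e^{t_1}$. In depth one this division is harmless formally, since it is undone by multiplying by $E_{1,0}-E_{1,1}$.

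As an independent check I can also verify the formula directly. Using $\sum_{u=0}^{h-1}(\xi e^{t})^u=(1-(\xi e^t)^h)/(1-\xi e^t)$ and $e^{ht}\Hcal(t;\xi)$, comparing coefficients of $t^{l_1}/l_1!$ gives $\Scal_h=\Bfk_{l_1}(\xi)-\xi^h\sum_m\binom{l_1}{m}h^m\Bfk_{l_1-m}(\xi)$, which matches the stated formula.
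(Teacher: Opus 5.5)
Your core computation is the paper's proof. You specialize Theorem~\ref{thm explicit formula} to $r=1$, where only $\mathbf{e}=(0)$ occurs and the multi-index sum collapses to a single $m=m_{0,1,1}\in\{0,\dots,l_1\}$. In the $i_1=0$ branch only $m=0$ survives because $0^m=\delta_{m,0}$. Your direct check via $\sum_{u=0}^{h-1}(\epsilon_1^{-1}e^t)^u=(1-\epsilon_1^{-h}e^{ht})\Hcal(t;\epsilon_1^{-1})$ is also correct. All of this is valid provided $\epsilon_1\neq 1$.

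The paragraph on $\epsilon_1=1$ is wrong, and in fact the stated formula fails there. For twist $1$, $\Hcal(t;1)$ is a Laurent series with polar term $\Bfk_{-1}(1)t^{-1}=-t^{-1}$. Multiplying by $E_{1,0}-E_{1,1}=1-e^{ht}$ does cancel the pole, but not harmlessly. That polar term contributes $(e^{ht}-1)/t$, i.e.\ an extra $h^{l_1+1}/(l_1+1)$ in the coefficient of $t^{l_1}/l_1!$. The right-hand side cannot produce this term, since it is built only from $\Bfk_n$ with $n\geq 0$ and is supported on $l\le l_1$.

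Concretely, take $l_1=0$ and $\epsilon_1=1$. Then $\Scal_h=\sum_{u=0}^{h-1}1=h$, so uniqueness of the expansion in the functions $h^l\xi^h$ forces $\Bcal^{(0;1;\emptyset)}_{1,1}=1$. The stated formula instead gives $\Bfk_0(1)-\Bfk_0(1)=0$ at $(l,\xi)=(0,1)$ and $0$ at $(1,1)$. In general, the correct expansion at $\epsilon_1=1$ is the stated one plus $\frac{1}{l_1+1}\delta_{\xi,1}\delta_{l,l_1+1}$. Note that Definition~\ref{defn CMBN} does allow $l=l_1+1$.

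So drop that claim and assume $\epsilon_1\neq1$ explicitly; your own direct check, run at twist $1$ with the $n=-1$ term kept, exposes exactly the missing term. The paper's proof is equally silent here, but the restriction is implicit there. Definition~\ref{defn TMBN} requires $\xi_j\neq 1$, so Theorem~\ref{thm explicit formula} only makes sense for $\epsilon_k\neq1$. That is also the only case used later, where $\epsilon_i\in\mu_c\setminus\{1\}$.
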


\begin{proof}
  Substituting $r = 1$ into Theorem \ref{thm explicit formula}, we get
  \begin{align*}
    & \Bcal_{l,\xi}^{(l_{1};\epsilon_{1};\emptyset)} \\
    = & \sum_{e_1 = 0} \sum_{m_{e_1,1,1} = 0}^{l_1} \sum_{i_1 = 0}^{1-e_1} (-1)^{e_1 + i_1} (i_1)^{m _{e_1, 1, 1}} \Bfk_{l_{1} - m_{e_1,1,1}} (\epsilon_1^{-1} (1 - e_1)) \binom{l_{1}}{m_{e_1,1,1}} \delta_{\xi, \epsilon_1^{-i_1}} \delta_{l, m _{e_1, 1, 1}} \\
    = & \sum_{m_{0,1,1} = 0}^{l_1} \sum_{i_1 = 0}^{1} (-1)^{i_1} (i_1)^{m _{0, 1, 1}} \Bfk_{l_{1} - m_{0,1,1}} (\epsilon_1^{-1}) \binom{l_{1}}{m_{0,1,1}} \delta_{\xi, \epsilon_1^{-i_1}} \delta_{l, m _{0, 1, 1}} \\
    = & \sum_{m_{0,1,1} = 0}^{l_1} 0^{m _{0, 1, 1}} \Bfk_{l_{1} - m_{0,1,1}} (\epsilon_1^{-1}) \binom{l_{1}}{m_{0,1,1}} \delta_{\xi, 1} \delta_{l, m _{0, 1, 1}} - \sum_{m_{0,1,1} = 0}^{l_1} \Bfk_{l_{1} - m_{0,1,1}} (\epsilon_1^{-1}) \binom{l_{1}}{m_{0,1,1}} \delta_{\xi, \epsilon_1^{-1}} \delta_{l, m _{0, 1, 1}}.
  \end{align*}
  This proves the corollary.
\end{proof}

The following corollary is the explicit formula for $l = 0$.

\begin{cor}\label{cor l = 0 formula}
  Let $c \in \ZZ_{p}^{\times}\cap\NN$, $c\geq 2$. Put $(l_{i})_{r}=(l_{1},\ldots,l_{r})\in \NN_{0}^{r}$ and $(\epsilon_{1},\ldots,\epsilon_{r})\in\mu_c^r$. For $(\kappa_{i})_{r-1}=(\kappa_{1},\dots,\kappa_{r-1})\in \NN_{0}^{r-1}$, set $\kappa_{0} = 0$. Then, the following formula
  \begin{align*}
    \Bcal_{0,\xi}^{((l_{i})_{r};(\epsilon_{i})_{r};(\kappa_{i})_{r-1})} = & \sum_{(e_i)_r \in \{0\}\times \{0, 1\}^{r-1}} \sum_{i_1 = 0}^{1-e_1} \cdots \sum_{i_r = 0}^{1-e_r} (-1)^{\sum_{k = 1}^{r} (e_k + i_k)}\\
    & \Bfk_{\left(l_{1} \circ_{e_2} \cdots \circ_{e_r} l_{r}\right)} (\epsilon_1^{-1} (1 - e_1) \circ_{e_2} \cdots \circ_{e_r} \epsilon_r^{-1} (1 - e_r)) \delta_{\xi, \prod_{k = 1}^{r} \epsilon_{k}^{-(\kappa_{k-1} + i_k)}}
  \end{align*}
  holds for every $\xi$.
\end{cor}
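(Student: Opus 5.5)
This is a direct specialization of Theorem \ref{thm explicit formula} to $l = 0$, in the same way that Corollary \ref{cor r = 1 formula} specializes it to $r=1$. I would substitute $l=0$ into the formula of Theorem \ref{thm explicit formula} and examine the Kronecker factor $\delta_{l, \sum_{k=1}^{r}\sum_{j=k}^{r} m_{\mathbf{e},k,j}}$.

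Since every $m_{\mathbf{e},k,j}$ is a non-negative integer, the condition $\sum_{k}\sum_{j\ge k} m_{\mathbf{e},k,j} = 0$ forces $m_{\mathbf{e},k,j}=0$ for all $1\le k\le j\le r$. So for each $\mathbf{e}$, the inner sum over the $m_{\mathbf{e},i,j}$ collapses to the single term with all $m$'s equal to zero. The constraints $m_{\mathbf{e},1,k}+\cdots+m_{\mathbf{e},k,k}\le l_k$ are then automatically satisfied, since $l_k\ge 0$. I would then evaluate the remaining factors at this point.

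\begin{itemize}
\item Each multinomial coefficient becomes $\binom{l_k}{0,\ldots,0,l_k}=1$.
\item The arguments of $\Bfk$ become $l_1\circ_{e_2}\cdots\circ_{e_r} l_r$.
\item The product $\prod_{k}(\kappa_{k-1}+i_k)^{0}$ equals $1$.
\end{itemize}

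The last item is the only point needing care. When $k=1$ and $i_1=0$, we have $\kappa_0+i_1=0$, so the factor is $0^0$. Here we must use the convention $0^0=1$ already adopted in the paper, both in the definition of $\Scal$ and implicitly in the proof of Theorem \ref{thm explicit formula}, where $(\kappa_{k-1}h)^{n}$ with $n=0$ arises as the constant term of an exponential. This is also exactly how the $\delta_{l,0}$ term survives in Corollary \ref{cor r = 1 formula}.

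After these substitutions, what remains is the sum over $\mathbf{e}\in\{0\}\times\{0,1\}^{r-1}$ and over $i_k\in\{0,\ldots,1-e_k\}$. The summand is the sign $(-1)^{\sum_k(e_k+i_k)}$ times the Kronecker factor $\delta_{\xi,\prod_k\epsilon_k^{-(\kappa_{k-1}+i_k)}}$ times the TMBN with parameters $\epsilon_1^{-1}(1-e_1)\circ_{e_2}\cdots\circ_{e_r}\epsilon_r^{-1}(1-e_r)$. This is precisely the claimed formula. There is no real obstacle beyond justifying the $0^0=1$ convention.
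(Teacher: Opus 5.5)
Your proposal is correct and follows the paper's own argument: substitute $l=0$ into Theorem \ref{thm explicit formula}, observe that $\delta_{0,\sum_{k}\sum_{j\ge k} m_{\mathbf{e},k,j}}$ forces every $m_{\mathbf{e},k,j}=0$, and simplify the multinomial coefficients, the powers $(\kappa_{k-1}+i_k)^{0}$, and the $\Bfk$ indices. Your explicit appeal to the convention $0^0=1$ is a welcome addition, since the paper uses it silently; note that it is needed for every $k$ with $\kappa_{k-1}+i_k=0$, not only for $k=1$ (for instance whenever $\kappa_{k-1}=0$ and $i_k=0$), and the same convention covers all these cases.
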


\begin{proof}
  Substituting $l = 0$ into Theorem \ref{thm explicit formula}, we get
  \begin{align*}
    & \Bcal_{0,\xi}^{((l_{i})_{r};(\epsilon_{i})_{r};(\kappa_{i})_{r-1})} \\
    = & \sum_{\mathbf{e} = (e_i)_r \in \{0\}\times \{0, 1\}^{r-1}} \sum_{\substack{\forall 1 \leq i \leq j \leq r \ m_{\mathbf{e},i,j}\in \NN_0 \\ m_{\mathbf{e},1,1} \leq l_1 \\ \cdots \\ m_{\mathbf{e},1,r} + \cdots + m_{\mathbf{e},r,r} \leq l_r}} \sum_{i_1 = 0}^{1-e_1} \cdots \sum_{i_r = 0}^{1-e_r} (-1)^{\sum_{k = 1}^{r} (e_k + i_k)} \prod_{k = 1}^{r} (\kappa_{k-1} + i_k)^{\sum_{j = k}^{r}m _{\mathbf{e}, k, j}}\\
    & \Bfk_{\left(l_{1} - m_{\mathbf{e},1,1} \circ_{e_2} \cdots \circ_{e_r} l_{r} - \left( \sum_{i = 1}^{r} m_{\mathbf{e},i,r} \right)\right)} (\epsilon_1^{-1} (1 - e_1) \circ_{e_2} \cdots \circ_{e_r} \epsilon_r^{-1} (1 - e_r)) \\
    & \prod_{k = 1}^{r} \binom{l_{k}}{m_{\mathbf{e},1,k}, \ldots, m_{\mathbf{e},k,k}, l_{k} - (m_{\mathbf{e},1,k} + \cdots + m_{\mathbf{e},k,k})} \delta_{\xi, \prod_{k = 1}^{r} \epsilon_{k}^{-(\kappa_{k-1} + i_k)}} \delta_{0, \sum_{k = 1}^{r} \sum_{j = k}^{r}m _{\mathbf{e}, k, j}} \\
    = & \sum_{(e_i)_r \in \{0\}\times \{0, 1\}^{r-1}} \sum_{i_1 = 0}^{1-e_1} \cdots \sum_{i_r = 0}^{1-e_r} (-1)^{\sum_{k = 1}^{r} (e_k + i_k)}\\
    & \Bfk_{\left(l_{1} \circ_{e_2} \cdots \circ_{e_r} l_{r}\right)} (\epsilon_1^{-1} (1 - e_1) \circ_{e_2} \cdots \circ_{e_r} \epsilon_r^{-1} (1 - e_r)) \delta_{\xi, \prod_{k = 1}^{r} \epsilon_{k}^{-(\kappa_{k-1} + i_k)}}.
  \end{align*}
  This proves the corollary.
\end{proof}

\section{Explicit formula for pMLFVs}\label{section Explicit formula for pMLFVs}
In this section, we recall two results and then prove Theorem \ref{thm Main thm}. The first is a theorem from \cite{FKMTcomplex}, which identifies TMBNs with special values of the multiple zeta-function of the generalized Euler-Zagier-Lerch type. The second is a theorem from \cite{CMBN}, which gives an explicit expansion of a certain family of $p$MLFs as an infinite series indexed by combinatorial data, with coefficients given by CMBNs. We then prove a reformulation of this expansion in which the coefficients are expressed in terms of these special values in Theorem \ref{thm Main thm}.

\begin{defn}[\cite{FKMTcomplex}]
  Let $\xi_1,\ldots,\xi_r\in \CC$ be roots of unity. For $\gamma_1,\ldots,\gamma_r\in \CC$ with $\Re \gamma_j >0$ ($1 \leq j \leq r$), where $\Re$ denotes the real part, the {\it multiple zeta-function of the generalized Euler-Zagier-Lerch type} is defined by
  \begin{equation}
    \zeta_r((s_j); (\xi_j); (\gamma_j)) \coloneqq \sum_{m_1=1}^\infty \cdots \sum_{m_r=1}^\infty \prod_{j=1}^{r} \xi_j^{m_j} (m_1\gamma_1+\cdots+m_j\gamma_j)^{-s_j},
  \end{equation}
  which is absolutely convergent in the region
  $$\Dcal_r = \{(s_1, \ldots, s_r)\in \CC^r \big| \Re (s_{r-k+1} + \cdots + s_r) > k \ (1 \leq k \leq r)\}.$$
\end{defn}

\begin{thm}[{\cite[Theorem 2.1]{FKMTcomplex}}]\label{thm TMBN}
  Let $\xi_1,\ldots,\xi_r\in \CC$ be roots of unity and $\gamma_1,\ldots,\gamma_r\in \CC$ with $\Re \gamma_j >0 \ (1 \leq  j \leq  r)$. Assume that
  $$\xi_j \neq 1 \text{ for all } j \ (1 \leq  j \leq  r).$$
  Then, with the above notation, $\zeta_r((s_j);(\xi_j);(\gamma_j))$ can be analytically continued to $\CC^r$ as an entire function in $(s_j)$. For $n_1,\ldots,n_r\in \NN_0$,
  $$\zeta_r((-n_j); (\xi_j); (\gamma_j))=(-1)^{r + n_1 + \cdots + n_r}\Bfk((n_j); (\xi_j^{-1}); (\gamma_j)).$$
\end{thm}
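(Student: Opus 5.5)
The plan is to prove the statement (Theorem \ref{thm TMBN}) by the Mellin--Barnes-free contour-integral method of \cite{FKMTcomplex}. The idea is to represent $\zeta_r$ as an integral over a Hankel contour whose integrand is essentially the generating function $\Hcal_r$ of Definition \ref{defn TMBN}.

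\textbf{Step 1: integral representation on $\Dcal_r$.} Start from
\[
\Gamma(s)w^{-s}=\int_0^\infty t^{s-1}e^{-wt}\,dt \qquad (\Re w>0).
\]
Apply this to each factor $(m_1\gamma_1+\cdots+m_j\gamma_j)^{-s_j}$. Sum the resulting geometric series in $m_1,\ldots,m_r$ under the integral, which is allowed since $|\xi_j|=1$ and $\Re\gamma_j>0$. After the change of variables that makes the exponents $\gamma_j\sum_{k\ge j}t_k$ appear, this gives
\[
\zeta_r((s_j);(\xi_j);(\gamma_j))=\prod_{j}\frac{1}{\Gamma(s_j)}\int_{(0,\infty)^r}\prod_j t_j^{s_j-1}\prod_{j=1}^r\frac{\xi_j e^{-\gamma_j\sum_{k\ge j}t_k}}{1-\xi_j e^{-\gamma_j\sum_{k\ge j}t_k}}\,dt .
\]
One then rewrites each factor as $\frac{\xi_j e^{-x}}{1-\xi_j e^{-x}}=\frac{1}{\xi_j^{-1}e^{x}-1}=-\Hcal(x;\xi_j^{-1})$. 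Note that $\xi_j\neq1$ is exactly what makes this function holomorphic near $x=0$.

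\textbf{Step 2: replace each $(0,\infty)$ by a Hankel contour.} Replace each integral by one over the Hankel contour $C$ (around the positive real axis, small radius). This uses
\[
\int_C t^{s-1}f(t)\,dt=(e^{2\pi i s}-1)\int_0^\infty t^{s-1}f(t)\,dt ,
\]
valid because the integrand has no pole at $t=0$. Combined with
\[
\frac{1}{\Gamma(s)(e^{2\pi i s}-1)}=\frac{\Gamma(1-s)e^{-\pi i s}}{2\pi i},
\]
this gives a representation that is manifestly entire in $(s_j)$. The contour integral converges for all $s$ thanks to the exponential decay along the real direction, and $\Gamma(1-s)e^{-\pi i s}/(2\pi i)$ is harmless. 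The main care here is twofold. First, the radius must be chosen small enough that $\xi_j^{-1}e^{\gamma_j\sum_k t_k}\neq1$ on the product of contours; this is possible since $\xi_j\ne1$. Second, one must justify uniform convergence, keeping in mind that the variables are coupled through the sums $\sum_{k\ge j}t_k$. I expect this step, analytic continuation with the coupled variables, to be the main obstacle. I would handle it by bounding $|\Hcal|$ on the contours, using $\Re\gamma_j>0$ to keep $\gamma_j\sum t_k$ away from the poles $\log\xi_j^{-1}+2\pi i\mathbb Z$.

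\textbf{Step 3: evaluate at $s_j=-n_j$.} At $s_j=-n_j$ the factor $t_j^{-n_j-1}$ is single-valued. The contour collapses to a small circle around $0$, and $\Gamma(1+n_j)e^{\pi i n_j}/(2\pi i)$ times the circle integral extracts $(-1)^{n_j}n_j!$ times the coefficient of $t_j^{n_j}$. Expanding $\prod_j(-\Hcal(\gamma_j\sum_{k\ge j}t_k;\xi_j^{-1}))=(-1)^r\Hcal_r((t_j);(\xi_j^{-1});(\gamma_j))$ via Definition \ref{defn TMBN}, the coefficient of $\prod t_j^{n_j}/n_j!$ is $\Bfk((n_j);(\xi_j^{-1});(\gamma_j))$. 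Collecting signs yields the factor $(-1)^{r+n_1+\cdots+n_r}$, which is the claimed formula.
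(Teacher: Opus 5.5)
The paper gives no proof of this statement; it quotes \cite[Theorem 2.1]{FKMTcomplex}. Your route is the standard Hankel-contour argument used there: Mellin integrals, summing the geometric series into $(-1)^r\Hcal_r((t_j);(\xi_j^{-1});(\gamma_j))$, passing to Hankel contours, and reading off Taylor coefficients at $s_j=-n_j$. Your identities and signs check out:
\begin{itemize}
\item $\xi_je^{-x}/(1-\xi_je^{-x})=-\Hcal(x;\xi_j^{-1})$;
\item $1/(\Gamma(s)(e^{2\pi i s}-1))=\Gamma(1-s)e^{-\pi i s}/(2\pi i)$;
\item at $s_j=-n_j$ each variable contributes $(-1)^{n_j}n_j!$ times the coefficient of $t_j^{n_j}$, which gives $(-1)^{r+\sum n_j}\Bfk((n_j);(\xi_j^{-1});(\gamma_j))$.
\end{itemize}

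The gap is in Step 2, where you call the contour representation \emph{manifestly entire} and the factor $\Gamma(1-s)e^{-\pi i s}/(2\pi i)$ harmless. In fact $\Gamma(1-s)$ has simple poles at $s=1,2,3,\ldots$. As written, you therefore only obtain a meromorphic continuation to $\CC^r$, with possible poles along the hyperplanes $s_j=k\in\NN$. The statement asserts an entire function, so you must show these singularities are removable.

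One way to do this is the following.
\begin{itemize}
\item On $\{s_j=k\}$ the factor $t_j^{k-1}$ is single-valued, so the two rays of the $t_j$-contour cancel.
\item The circle then contributes $0$ by Cauchy's theorem, because the integrand is holomorphic in $t_j$ on the closed disc of radius $\varepsilon$ while the other variables lie on their contours. For this, choose the radius so that $\xi_l^{-1}e^{\gamma_l\sum_{k\ge l}t_k}\neq 1$ also when $t_j$ fills the disc; the same estimate you sketch (using $\Re\gamma_l>0$ and $\xi_l\neq1$) allows this.
\item Hence the entire function $g(s)=\int_{\mathcal{C}^r}\prod_j t_j^{s_j-1}\,\Hcal_r((t_j);(\xi_j^{-1});(\gamma_j))\,dt$ vanishes identically on $s_j=k$.
\item It is therefore divisible by $s_j-k$, and this cancels the simple pole of $\Gamma(1-s_j)$.
\end{itemize}

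Two smaller corrections:
\begin{itemize}
\item The Mellin formula of Step 1, and the contour-deformation identity $\int_C=(e^{2\pi i s}-1)\int_0^\infty$, need $\Re s_j>0$ for every $j$. So the representation holds on $\Dcal_r\cap\{\Re s_j>0\ \forall j\}$, not on all of $\Dcal_r$. This smaller set suffices for the identity theorem.
\item No change of variables is involved, since $\sum_j t_j(m_1\gamma_1+\cdots+m_j\gamma_j)=\sum_k m_k\gamma_k\sum_{j\ge k}t_j$ directly.
\end{itemize}
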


\begin{rem}
  We use the notation $\zeta_r((s_j);(\xi_j)) \coloneqq \zeta_r((s_j);(\xi_j);(1, \ldots, 1))$ when $\gamma_j = 1$ for all $j$.
\end{rem}

Fix a prime number $p$. We put $\Ocal_{\CC_{p}}$ to be the ring of integers of $\CC_{p}$. Let $\omega: \Ocal_{\CC_{p}}^{\times}\to  \Ocal_{\CC_{p}}^{\times}$ be the Teichmüller character, and let $\langle x \rangle = \frac{x}{\omega(x)}$ for $x \in \Ocal_{\CC_{p}}^{\times}$. Set 
$$\int_{\ZZ_{p}} f(x) d \mfk_{z}(x) = \lim_{N \rightarrow \infty} \sum_{a=0}^{p^{N}-1} f(a) \mfk_{z}(a+p^{N}\ZZ_{p}),$$
where $\mfk_{z}$ is the measure defined by
$$\mfk_{z}(j+p^{N}\ZZ_{p})= \frac{z^{j}}{1-z^{p^{N}}} \ (0\leq  j \leq p^{N}-1)$$
for $z \in \PP^{1}(\CC_{p})$ with $|z-1|_{p} \geq 1$.

\begin{defn}\label{defn p-adic multiple L-function}
  Let $p$ be a prime number and $(s_i)_r$ be an element of the set
  $$\Xfk_{r}(d) := \{ (s_{1}, \ldots, s_{r}) \in \CC_{p}^{r}\ |\ |s_{j}|_{p} \leq d^{-1}p^{-\frac{1}{p-1}} \ (1 \leq j \leq r)\},$$
  where $d\in \RR_{>0}$. Let $(k_i)_r \in \ZZ^r$, $c \in \NN_{>1}$ which is prime to $p$, and
  $$(\ZZ_{p}^{r})' :=\{ (x_1,\dots, x_r) \in \ZZ_{p}^{r} \ |\ p \nmid x_{1},\ p \nmid (x_{1}+x_{2}),\ \ldots,\ p \nmid (x_{1}+\cdots+x_{r}) \}.$$ 
  The {\it $p$-adic multiple $L$-function} ($p$MLF) is defined by 
  \begin{multline*}
    L_{p,r} ((s_i)_r;(\omega^{k_{i}})_r;(1)_r;c) \coloneqq \\
    \int_{ (\ZZ_{p}^{r})'}\langle x_{1}  \rangle^{-s_{1}}\langle x_{1}   + x_{2}  \rangle^{-s_{2}}\cdots\langle x_{1}  + \cdots + x_{r}  \rangle^{-s_{r}} \\
    \omega^{k_{1}}(x_{1})\omega^{k_{2}}(x_{1}+x_{2})\cdots\omega^{k_{r}}(x_{1} + \cdots + x_{r} )\prod_{i=1}^{r} d\tilde{\mfk}_{c}(x_{i}),
  \end{multline*}
  where $\tilde{\mfk}_{c}$ is the measure defined by
  $$\tilde{\mfk}_{c}:=\sum_{\substack{\xi^{c}=1\\\xi \neq 1}} \mfk_{\xi}.$$
\end{defn}

\begin{defn}\label{defn cyclotomic multiple harmonic value}
  Let $(n_i)_r:=(n_{1},\ldots,n_{r}) \in \NN^r$, and $(\epsilon_i)_r:=(\epsilon_{1},\ldots,\epsilon_{r})\in\mu_{c}^r$, and $m \in \NN$.
  \begin{enumerate}[(i)]
    \item The {\it cyclotomic multiple harmonic sum} (CMHS) is defined to be the element of $\QQ(\mu_{c})$ given by
    $$H_{m}\left((n_{i})_{r};(\epsilon_{i})_{r}\right) \coloneqq \sum_{0<m_{1}<\cdots<m_{r}<m} \frac{( \frac{\epsilon_{2}}{\epsilon_{1}})^{m_{1}} \cdots (\frac{1}{\epsilon_{r}})^{m_{r}}}{m_{1}^{n_{1}} \cdots m_{r}^{n_{r}}}.$$
    \item Let $\Pcal_{c}$ be the set of prime numbers which do not divide $c$. For $p \in \Pcal_{c}$, we also denote by $\epsilon_{i}$ ($1 \leq i \leq r$) the image of $\epsilon_{i}$ by the embedding $\QQ(\mu_{c})\hookrightarrow \QQ_{p}(\mu_{c})$. The {\it cyclotomic multiple harmonic value} (CMHV) is the family of CMHSs defined by
        $$\Hfk\left((n_{i})_{r};(\epsilon_{i})_{r}\right) \coloneqq \left(  p^{n_{1}+\cdots+n_{r}}H_{p}\left((n_{i})_r;(\epsilon_{i})_r\right)\right)_{p \in \Pcal_{c}} \in  \prod_{p \in \Pcal_{c}} \QQ_{p}(\mu_{c}).$$
  \end{enumerate}
\end{defn}

\begin{defn}
  For $(x_p)_p \in \prod_{p \in \Pcal_c} \QQ_p(\mu_c)$, we define $(x_p)_p^{\mathrm{Frob}^{-1}}$ to be $(\mathrm{Frob}_p^{-1}(x_p))_p$, where
  $$\mathrm{Frob}_{p} : \QQ_p(\mu_c) \rightarrow \QQ_p(\mu_c)$$
  is the Frobenius map sending $\xi \mapsto \xi^p$ for all $\xi \in \mu_c$.
\end{defn}

\begin{defn}\label{defn J}
  Let $E_r$ be the set of triples $J = (P_1, P_2, P_3)$ of subsets of $\{1, \ldots, r\}$ such that $1 \in P_2$ and $P_1 \sqcup P_2 \sqcup P_3 = \{1, \ldots, r\}$. Put $(l_{i})_{r}=(l_{1},\ldots,l_{r})\in \NN_{0}^{r}$ and $(\epsilon_{1},\ldots,\epsilon_{r})\in\mu_c^r$. For $j\in P_2 \cup P_3$ we define
  $$l_j^{(P_{2, 3})} = \sum_{k = j}^{\mathrm{min}\{(P_2 \cup P_3) \cap \{j + 1, \ldots, r\}\} - 1} l_k$$
  and
  $$\epsilon_j^{(P_{2, 3})} = \epsilon_{\mathrm{min}\{(P_2 \cup P_3) \cap \{j + 1, \ldots, r\}\}}$$
  if $j \neq \mathrm{max}\{P_2 \cup P_3\}$,
  $$l_j^{(P_{2, 3})} = \sum_{k = j}^{r} l_k$$
  and
  $$\epsilon_j^{(P_{2, 3})} = 1$$
  if $j = \mathrm{max}\{P_2 \cup P_3\}$.
  For $j\in P_3$ we define
  $$j_{(P_{2})} = \mathrm{max}\{\{1, \ldots, j - 1\} \cap P_2\}$$
  and
  $$\kappa'_j = \sharp(P_3 \cap \{j_{(P_2)}, \ldots, j\}).$$
  For $i\in P_2$ with $i \neq \mathrm{max}\{P_2\}$, we define
  $$i^{(P_{2})} = \mathrm{min}\{\{i + 1, \ldots, r\} \cap P_2\},$$
  and we define
  $$\kappa_i = \sharp(P_3 \cap \{i, \ldots, i^{(P_{2})}\})$$
  if $i \neq \mathrm{max}\{P_2\}$ and
  $$\kappa_i = 0$$
  if $i = \mathrm{max}\{P_2\}$.
  For $J \in E_r$, we define
  $$\mathbf{l}_{J} = (l_{J, 1}, \ldots, l_{J, \sharp(P_2)}) \coloneqq \left( l^{(P_{2,3})}_{i}+\sum_{\substack{j\in P_{3}\\ j_{(P_{2})}=i}}l^{(P_{2,3})}_{j}\right)_{i\in P_{2}},$$
  $$\boldsymbol{\epsilon}_{J} = (\epsilon_{J, 1}, \ldots, \epsilon_{J, \sharp(P_2)}) \coloneqq (\epsilon^{-1}_i)_{i\in P_2},$$
  $$\boldsymbol{\kappa}_{J} = (\kappa_{J, 1}, \ldots, \kappa_{J, \sharp(P_2) - 1}) \coloneqq (\kappa_{i})_{i \in P_{2}\setminus \{\mathrm{max}\{P_2\}\}}.$$
\end{defn}

\begin{defn}
  Let $J\in E_r$ be the triple $(P_1, P_2, P_3)$. We define
  $$T_{r, J} \coloneqq \left\{ (t_i)_r \in [1, p-1]^r \left| \ \substack{ t_{i-1} \leq t_i, \text{ if } i \in P_1 \\ t_{i-1} > t_i, \text{ if } i \in P_3} \right. \right\}.$$
\end{defn}

\begin{defn}
  Let $A = (A_1, \ldots, A_k)$ be a sequence of subsets of $\{1, \ldots, r\}$ such that the $A_i$ are pairwise disjoint. The quasi-simplex $\delta_{r, A}$ is defined by
  $$\delta_{r, A} \coloneqq \left\{ (t_i)_r \in [1, p-1]^r \left| \substack{ t_a = t_{a'} \text{ if } a \in A_i, a' \in A_{i'}, i = i' \\ t_a < t_{a'} \text{ if } a' \in A_i, a \in A_{i'}, i < i' } \right. \right\}.$$
\end{defn}

\begin{defn}
  Let $S$ be a subset of $\{1, \ldots, r\}$. The set $\Delta_{r, S}$ of quasi-simplices is defined by
  $$\Delta_{r, S} \coloneqq \left\{ \delta_{r, A} \left| \bigcup_{i = 1}^{k} A_i = S \right. \right\}.$$
\end{defn}

\begin{defn}
  Let $\delta = \delta_{r, A}$ be a quasi-simplex. For an index $\bm{l} = (l_i)_r \in \NN_0^r$, and the pair $\bm{w} = ((n_i)_r, (\epsilon_i)_r) \in \NN^r \times \mu_c^r$, we define $\bm{w}(\bm{l})_\delta \coloneqq (\bm{n}(\bm{l})_\delta, \epsilon(\bm{l})_\delta)$, where
  $$\bm{n}(\bm{l})_\delta = \left( \sum_{a \in A_i} (n_a + l_a) \right)_{i \in \{1, \ldots, k\}} \text{ and } \epsilon(\bm{l})_\delta = \left( \prod_{j = i}^{k} \prod_{a \in A_j} \frac{\epsilon_{a + 1}}{\epsilon_{a}} \right)_{i \in \{1, \ldots, k\}}$$
  with $\epsilon_{r + 1} = 1$.
\end{defn}

\begin{defn}
  Let $X$ be a subset of $\NN$. We define $X' \coloneqq X \cup (X - 1)$, where $X - 1 \coloneqq \{x - 1 \in \NN | x \in X\}$.
\end{defn}

\begin{prop}[{\cite[Proposition 5]{CMBN}}]
  Let $J\in E_r$ be the triple $(P_1, P_2, P_3)$. Then we have a unique decomposition
  $$T_{r, J} = \bigsqcup_{\delta \in \Delta} \delta$$
  such that $\Delta \subset \Delta_{r, P_1' \cup P_3'}$.
\end{prop}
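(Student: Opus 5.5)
This is a combinatorial decomposition statement. I would prove existence and uniqueness separately, working entirely inside the discrete cube $[1,p-1]^r$; no earlier arithmetic input is needed.

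\textbf{Existence.} Fix $J=(P_1,P_2,P_3)$ and a point $t=(t_i)_r\in T_{r,J}$. The only coordinate relations imposed by $T_{r,J}$ are between adjacent indices $i-1,i$ with $i\in P_1\cup P_3$. So the coordinates that are actually constrained are exactly those with index in $S:=P_1'\cup P_3'$, since $i\in P_1\cup P_3$ involves $i$ and $i-1$. To $t$ I would attach the ordered set partition $A(t)=(A_1,\ldots,A_k)$ of $S$ obtained as follows:
\begin{itemize}
\item group the indices $a\in S$ according to the value $t_a$;
\item order the groups by increasing value.
\end{itemize}
By construction $t\in\delta_{r,A(t)}$, and $\delta_{r,A(t)}\in\Delta_{r,S}$. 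Conversely, I would check that whenever $\delta_{r,A}$ meets $T_{r,J}$ it is contained in $T_{r,J}$. The reason is that the conditions $t_{i-1}\le t_i$ ($i\in P_1$) and $t_{i-1}>t_i$ ($i\in P_3$) involve only coordinates indexed by $S$. On $\delta_{r,A}$ the relative order (equal, smaller, or larger) of any two such coordinates is constant, so these conditions are either satisfied everywhere on $\delta_{r,A}$ or nowhere. Taking $\Delta=\{\delta_{r,A}\in\Delta_{r,S}:\delta_{r,A}\cap T_{r,J}\neq\emptyset\}$ then gives $T_{r,J}=\bigcup_{\delta\in\Delta}\delta$.

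\textbf{Disjointness and uniqueness.} Distinct ordered partitions $A\neq B$ of the same set $S$ define disjoint quasi-simplices, because a point $t$ determines its ordered partition $A(t)$ uniquely; hence the union above is disjoint. For uniqueness, suppose $T_{r,J}=\bigsqcup_{\delta\in\Delta''}\delta$ with $\Delta''\subset\Delta_{r,S}$. Each $\delta\in\Delta''$ is nonempty, so it meets $T_{r,J}$ and lies in $\Delta$. Conversely, any $\delta_{r,A}\in\Delta$ contains a point $t$, and $t$ lies in some $\delta\in\Delta''$. Since $\delta$ and $\delta_{r,A}$ both belong to $\Delta_{r,S}$ and share the point $t$, disjointness of distinct members forces $\delta=\delta_{r,A}$. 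Therefore $\Delta''=\Delta$.

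\textbf{Main obstacle.} The delicate point is nonemptiness of the quasi-simplices. If $k>p-1$, or if the coordinates outside $S$ are left free in a way not intended by the definition, some $\delta_{r,A}$ could be empty, and then an empty cell could be added to any $\Delta$, breaking uniqueness. I would handle this by reading ``decomposition'' as a decomposition into nonempty cells, so that only $\delta_{r,A}$ with $k\le p-1$ are allowed. I would also check that coordinates with index outside $S$ range freely over $[1,p-1]$ both in $T_{r,J}$ and in every $\delta_{r,A}$, so the two sides agree in those coordinates.
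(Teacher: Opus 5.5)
Your proof is correct. The paper gives no proof of this proposition to compare against: it quotes it from Furusho--Jarossay and uses it as a black box.

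Your argument is the natural one and is self-contained:
\begin{itemize}
\item The defining inequalities of $T_{r,J}$ only compare coordinates indexed by $S=P_1'\cup P_3'$. So membership in $T_{r,J}$ depends only on the weak order (ties and strict comparisons) of $(t_a)_{a\in S}$.
\item Each point lies in exactly one nonempty member of $\Delta_{r,S}$, namely the cell recording that weak order.
\item Uniqueness follows because distinct nonempty members of $\Delta_{r,S}$ are disjoint.
\end{itemize}

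\textbf{Block order.} With $\delta_{r,A}$ as printed, the condition is $t_a<t_{a'}$ for $a'\in A_i$, $a\in A_{i'}$, $i<i'$. So later blocks carry \emph{smaller} values, and you should order the groups by decreasing value to get $t\in\delta_{r,A(t)}$. Since $\Delta_{r,S}$ is stable under reversing the order of blocks, nothing else in your argument changes.

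\textbf{Nonemptiness.} Your caveat is genuinely needed for a literal set-theoretic uniqueness statement; otherwise $\emptyset$ could be adjoined to $\Delta$ whenever it belongs to $\Delta_{r,S}$. It only matters for small $p$. A cell with $k$ nonempty blocks is nonempty iff $k\le p-1$, and $k\le\sharp S\le r$, so for $p>r$ every member of $\Delta_{r,S}$ is nonempty.

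\textbf{$p$-independence.} Your $\Delta$ then has a $p$-independent description: the ordered partitions of $S$ whose induced weak order satisfies $t_{i-1}\le t_i$ for $i\in P_1$ and $t_{i-1}>t_i$ for $i\in P_3$. This is worth recording, because $\Delta(T_{r,J})$ must be independent of $p$ for the expansion in Theorem~\ref{thm CMBN} to make sense as a statement about the whole family indexed by $p$.
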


\begin{defn}
  Let $\mathrm{proj}_{S} : \Delta_{r, \{1, \ldots, r\}} \rightarrow \Delta_{r, S}$ be the projection defined by $\mathrm{proj}_{S}(\delta_{r, A}) = \delta_{r, (A_i \cap S)_k}$. We define the set of quasi-simplices $\Delta(T_{r, J})$ to be the preimage $\mathrm{proj}_{P_1' \cup P_3'}(\Delta)$, where $\Delta$ is the set of quasi-simplices in the decomposition of the previous proposition.
\end{defn}

Using the notation above, Furusho and Jarossay \cite{CMBN} proved the following theorem.

\begin{thm}[{\cite[Theorem 1]{CMBN}}]\label{thm CMBN}
  For any $r$-tuple $(n_i)_r$ of positive integers, the family
  $$\left( p^{\sum_{i=1}^{r}n_{i}}L_{p,r}((n_{i})_{r}; (\omega^{-n_{i}})_{r}; (1)_{r}; c)\right)_{p \in \Pcal_{c}}$$ 
  is expressed as
  \begin{multline*}
    \sum_{{\bf l}=(l_{i})_{r} \in \mathbb{N}_{0}^{r}} \sum_{\substack{ \epsilon \in \mu_{c} \\ (\epsilon_{i})_{r} \in (\mu_{c} \setminus \{1\})^{r}}} \sum_{J=(P_{1},P_{2},P_{3}) \in E_{r}} \sum_{\substack{ \xi \in \mu_{c} \\ \prod_{j \in P_{3}} \left(\frac{\epsilon_{j}^{(P_{2,3})}}{\epsilon_{j}}\right)^{-\kappa'_{j}} \xi = \epsilon}} \\
    \frac{\epsilon}{\prod_{i=1}^{r}(1 - \epsilon_{i})}  \prod_{i=1}^{r} {-n_{i} \choose l_{i}} \Bcal_{0,\xi}^{(\mathbf{l}_{J},\boldsymbol{\epsilon}_{J},\boldsymbol{\kappa}_{J})} \sum_{\delta \in \Delta(T_{r,J})}  \Hfk({\bf w}({\bf l})_{\delta})^{\Frob^{-1}}.
  \end{multline*}
  This is an infinite series whose terms are $\QQ(\mu_{c})$-linear combinations of CMHVs of depth at most $r$ and of weights tending to infinity. It converges in $\prod_{p\in \Pcal_{c}} \QQ_{p}(\mu_{c})$ with respect to the topology of uniform convergence in $p \in \Pcal_{c}$.
\end{thm}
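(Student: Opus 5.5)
We would prove the expansion by unfolding the measure $\tilde{\mfk}_c$ into Riemann sums, splitting each variable into its residue mod $p$ and a $p$-divisible part, and binomially expanding the integrand. The first observation is that for $x\in\ZZ_p^\times$ we have $\langle x\rangle^{-n}\omega^{-n}(x)=x^{-n}$. Hence the integrand of $L_{p,r}((n_i)_r;(\omega^{-n_i})_r;(1)_r;c)$ is simply
\[
\prod_{j=1}^r (x_1+\cdots+x_j)^{-n_j}
\]
on $(\ZZ_p^r)'$. Expanding $\tilde{\mfk}_c=\sum_{\epsilon\neq 1}\mfk_\epsilon$ in each variable produces the sum over $(\epsilon_i)_r\in(\mu_c\setminus\{1\})^r$. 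For each term we write the integral as $\lim_{N\to\infty}$ of a finite sum over $0\le a_i<p^N$ with weights $\epsilon_i^{a_i}/(1-\epsilon_i^{p^N})$. Since $c$ is prime to $p$, the denominators are units and become $1-\epsilon_i$ after a Frobenius twist. This accounts for the prefactor $\prod_i(1-\epsilon_i)^{-1}$.

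Next we change to the partial-sum variables $u_j=a_1+\cdots+a_j$ and write $u_j=t_j+p\,y_j$ with $t_j\in[1,p-1]$; the condition $p\nmid u_j$ is exactly $t_j\neq0$. Then we expand
\[
(t_j+py_j)^{-n_j}=\sum_{l_j\ge0}\binom{-n_j}{l_j}t_j^{-n_j-l_j}p^{l_j}y_j^{l_j},
\]
which produces the sum over $\mathbf l$ with coefficients $\prod_i\binom{-n_i}{l_i}$. Multiplying by $p^{\sum n_i}$ turns the $t$-sums into $p^{\text{weight}}\times$ harmonic sums, so that the total weight $\sum(n_i+l_i)$ tends to infinity with $\mathbf l$. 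This gives uniform convergence in $p$, provided the remaining $y$-sums stay bounded.

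The combinatorial heart is how the constraints $0\le a_j<p^N$ (that is, $u_{j-1}\le u_j<u_{j-1}+p^N$) translate into joint constraints on $(t_j)$ and $(y_j)$. Comparing $t_{j-1}$ with $t_j$ decides whether a carry occurs. Indices with no extra carry ($t_{j-1}\le t_j$) form $P_1$; the "new" indices form $P_2$, with $1\in P_2$; the carry indices ($t_{j-1}>t_j$) form $P_3$. This yields the region $T_{r,J}$ for the $t$'s. The $t$-part is then a sum of $\prod t_j^{-n_j-l_j}\times$ roots of unity over $T_{r,J}$. Using Proposition 5 of \cite{CMBN} we decompose $T_{r,J}$ into quasi-simplices $\delta\in\Delta(T_{r,J})$; on each $\delta$ the sum is exactly a CMHS at $p$, namely $p^{-\mathrm{wt}}\Hfk(\mathbf w(\mathbf l)_\delta)_p$ up to the Frobenius twist. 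The twist arises because $\epsilon^{p}$ appears when passing from $a$ to $t$.

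The $y$-part is a sum of exactly the shape $\Scal_{\boldsymbol\kappa_J,h}(\mathbf l_J;\boldsymbol\epsilon_J)$ with $h=p^{N-1}$. Here the variables indexed by $P_1\cup P_3$ are merged into their $P_2$-predecessor, which explains $\mathbf l_J$ and $l^{(P_{2,3})}$, and the $P_3$-carries shift the windows by $\kappa_i h$. By Definition \ref{defn CMBN} this equals $\sum_{l,\xi}\Bcal^{(\mathbf l_J;\boldsymbol\epsilon_J;\boldsymbol\kappa_J)}_{l,\xi}h^l\xi^h$. As $N\to\infty$ we have $h^l\to0$ $p$-adically for $l\ge1$, so only $l=0$ survives. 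Moreover $\xi^{p^{N-1}}$, combined with the root-of-unity bookkeeping from the carries, namely the factor $\prod_{j\in P_3}(\epsilon_j^{(P_{2,3})}/\epsilon_j)^{-\kappa'_j}$, becomes the $\epsilon$ in the statement after applying $\Frob^{-1}$, along a subsequence of $N$ on which $p^{N}$ is constant mod $c$ (or by noting that the limit exists). This gives the coefficient $\epsilon\,\Bcal_{0,\xi}$.

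We expect the main obstacle to be the carry bookkeeping: checking that the partition into $J=(P_1,P_2,P_3)$ produces precisely $\mathbf l_J$, $\boldsymbol\epsilon_J$, $\boldsymbol\kappa_J$ and the $\kappa'_j$ root-of-unity shift. We would first do $r=1,2$ by hand to calibrate the conventions. Interchanging the limit in $N$ with the sum over $\mathbf l$ is routine, given the uniform $p$-adic bound $|p^{l}\binom{-n}{l}|\le p^{-l}$ and the integrality of the CMBNs away from $c$.
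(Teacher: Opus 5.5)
The paper does not prove this statement; it is quoted from Furusho--Jarossay \cite{CMBN}, so there is no in-paper argument to compare with. Your overall route is the natural one and fits every ingredient of the formula. Its steps are: $\langle x\rangle^{-n}\omega^{-n}(x)=x^{-n}$; Riemann sums for $\tilde{\mfk}_c$; partial sums $u_j=t_j+py_j$; binomial expansion in $py_j/t_j$; $y$-sums as $\Scal_{\boldsymbol\kappa_J,h}$ with $h=p^{N-1}$; and $h\to0$ killing $\Bcal_{l,\xi}$ for $l\geq 1$. However, the step you call the combinatorial heart is stated incorrectly, and your claim that the $y$-part is exactly an $\Scal$-sum depends on it. The condition $0\le a_j<p^N$ becomes $y_{j-1}\le y_j<y_{j-1}+h$ if $t_{j-1}\le t_j$, and $y_{j-1}<y_j\le y_{j-1}+h$ if $t_{j-1}>t_j$. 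Suppose $P_1$ and $P_3$ are defined, as you write, by $t_{j-1}\le t_j$ and $t_{j-1}>t_j$ respectively. Then every $j\geq 2$ lies in $P_1\cup P_3$, so $P_2=\{1\}$. The $y$-windows are then half-open of length $h$, whereas $\Scal_{\boldsymbol\kappa,h}$ uses open windows $u_{i-1}+\kappa h<u_i<u_{i-1}+(\kappa+1)h$. Nor can this explain why $T_{r,J}$ imposes no condition on $t$ at indices of $P_2$. The missing idea is to classify by $y_j-y_{j-1}$, splitting off the boundary point of each half-open window:
\begin{itemize}
\item $j\in P_1$ iff $y_j=y_{j-1}$, which forces $t_{j-1}\le t_j$;
\item $j\in P_3$ iff $y_j=y_{j-1}+h$, which forces $t_{j-1}>t_j$;
\item $j\in P_2$ iff $y_{j-1}<y_j<y_{j-1}+h$; both orders of $t_{j-1},t_j$ occur, and recombining them is why $T_{r,J}$ has no condition there.
\end{itemize}
Only with this trichotomy does the sum for fixed $J$ factor as a $t$-sum over $T_{r,J}$ times a $y$-sum. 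In that $y$-sum the $y_i$, $i\in P_2$, are free, with windows shifted by $\kappa_i$ (the number of $P_3$-indices between consecutive elements of $P_2$), and $y_j=y_{j_{(P_2)}}+\kappa'_j h$ for $j\in P_3$.

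Even then the $y$-sum is not literally $\Scal_{\boldsymbol\kappa_J,h}(\mathbf l_J;\boldsymbol\epsilon_J)$. For $j\in P_3$ the monomial $(y_{j_{(P_2)}}+\kappa'_j h)^{l_j}$ agrees with $y_{j_{(P_2)}}^{l_j}$ only modulo $h$; this is harmless, since the difference lies in $h\ZZ[\mu_c]$. The weight $\prod_j(\epsilon_j/\epsilon_{j+1})^{py_j}$ telescopes blockwise to the $\Scal$-weight for $(\epsilon_i^{-p})_{i\in P_2}$, times the constant $\prod_{j\in P_3}(\epsilon_j^{(P_{2,3})}/\epsilon_j)^{-p\kappa'_j h}$. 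The Frobenius power therefore sits on the $y$-side ($\epsilon^{u}=\epsilon^{t}(\epsilon^{p})^{y}$), not on the passage from $a$ to $t$ as you say. Substituting $\epsilon_i\mapsto\epsilon_i^{p^{-1}}$ in the outer sum moves it to the $t$-side as $\Frob^{-1}$ and turns $1-\epsilon_i^{p^N}$ into $1-\epsilon_i^{p^{N-1}}$. Along $N$ with $p^{N-1}\equiv 1 \pmod c$ this gives exactly $1/\prod_i(1-\epsilon_i)$, $\xi^h=\xi$, and the constraint $\prod_{j\in P_3}(\epsilon_j^{(P_{2,3})}/\epsilon_j)^{-\kappa'_j}\xi=\epsilon$. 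With these corrections your outline goes through. For the limit interchange at fixed $p$ you only need that the finite $y$-sums lie in $\ZZ[\mu_c]$. The $p$-integrality of $\Bcal_{0,\xi}$ for $p\nmid c$ is what gives uniformity in $p$; it follows, e.g., from Corollary~\ref{cor l = 0 formula}, since the twisted multiple Bernoulli numbers have only powers of $1-\epsilon_i^{-1}$ in their denominators.
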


Our main theorem is the following reformulation of the above expansion, in which the coefficients are expressed in terms of the special values of the multiple zeta-function.

\begin{thm}\label{thm Main thm}
  For any $r$-tuple $(n_i)_r$ of positive integers, the family
  $$\left( p^{\sum_{i=1}^{r}n_{i}}L_{p,r}((n_{i})_{r}; (\omega^{-n_{i}})_{r}; (1)_{r}; c)\right)_{p \in \Pcal_{c}}$$
  can be expressed as
  \begin{multline*}
    \sum_{{\bf l}=(l_{i})_{r} \in \NN_{0}^{r}} \sum_{\substack{ \epsilon \in \mu_{c} \\ (\epsilon_{i})_{r} \in (\mu_{c} \setminus \{1\})^{r}}} \sum_{J=(P_{1},P_{2},P_{3}) \in E_{r}} \\
    \frac{\epsilon}{\prod_{i=1}^{r}(1 - \epsilon_{i})}  \prod_{i=1}^{r} {-n_{i} \choose l_{i}} \sum_{(e_i)_{\sharp(P_2)} \in \{0\}\times \{0, 1\}^{\sharp(P_2)-1}} \sum_{i_1 = 0}^{1-e_1} \cdots \sum_{i_{\sharp(P_2)} = 0}^{1-e_{\sharp(P_2)}} (-1)^{\sharp(P_2) + \sum_{k = 1}^{\sharp(P_2)} (i_k + l_{J, k})}\\
    \zeta_{\sharp(P_2)-\sum_{k = 1}^{\sharp(P_2)}e_k}\left( (-l_{J, 1} \circ_{e_2} \cdots \circ_{e_{\sharp(P_2)}} -l_{J, \sharp(P_2)}); (\epsilon_{J, 1} (1 - e_1) \circ_{e_2} \cdots \circ_{e_{\sharp(P_2)}} \epsilon_{J, \sharp(P_2)} (1 - e_{\sharp(P_2)})) \right) \\
    \delta_{ \epsilon \prod_{j \in P_{3}} \left(\frac{\epsilon_{j}^{(P_{2,3})}}{\epsilon_{j}}\right)^{\kappa'_{j}}, \prod_{k = 1}^{\sharp(P_2)} \epsilon_{J, k}^{-(\kappa_{J, k-1} + i_k)}} \sum_{\delta \in \Delta(T_{r,J})}  \Hfk({\bf w}({\bf l})_{\delta})^{\Frob^{-1}}.
  \end{multline*}
\end{thm}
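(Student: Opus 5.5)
The plan is to start from the Furusho–Jarossay expansion in Theorem \ref{thm CMBN} and substitute, term by term, the explicit formula for $\Bcal_{0,\xi}$ from Corollary \ref{cor l = 0 formula}. We then convert each twisted multiple Bernoulli number into a special value of $\zeta$ via Theorem \ref{thm TMBN}. No convergence questions arise: the outer series and its convergence are exactly those of Theorem \ref{thm CMBN}. Only the finite inner coefficient is rewritten, as a finite sum over $\mathbf{e}$ and $(i_k)$, so all rearrangements are finite.

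\textbf{Step 1.} For fixed $\mathbf{l}$, $(\epsilon_i)_r$, $J$ and $\xi$, apply Corollary \ref{cor l = 0 formula} with $r$ replaced by $\sharp(P_2)$ and data $(\mathbf{l}_J,\boldsymbol{\epsilon}_J,\boldsymbol{\kappa}_J)$. This writes $\Bcal_{0,\xi}^{(\mathbf{l}_J,\boldsymbol{\epsilon}_J,\boldsymbol{\kappa}_J)}$ as a finite sum over $(e_k)$ and $(i_k)$. Each summand is a sign times
\[
\Bfk_{(l_{J,1}\circ_{e_2}\cdots\circ_{e_{\sharp(P_2)}} l_{J,\sharp(P_2)})}\bigl(\epsilon_{J,1}^{-1}(1-e_1)\circ_{e_2}\cdots\bigr)
\]
times $\delta_{\xi,\prod_k \epsilon_{J,k}^{-(\kappa_{J,k-1}+i_k)}}$.

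\textbf{Step 2.} Eliminate the sum over $\xi$. In Theorem \ref{thm CMBN}, $\xi$ is determined by $\epsilon$ via
\[
\xi=\epsilon\prod_{j\in P_3}\Bigl(\frac{\epsilon_j^{(P_{2,3})}}{\epsilon_j}\Bigr)^{\kappa'_j}.
\]
Substituting this into the Kronecker delta produces exactly the delta appearing in Theorem \ref{thm Main thm}.

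\textbf{Step 3.} Rewrite the Bernoulli numbers as zeta values. When $e_k=1$, the merged block uses the $+$ operation, so that block carries no root-of-unity factor. After merging, the TMBN has depth $d=\sharp(P_2)-\sum e_k$ and its arguments are the merged roots $\epsilon_{J,k}^{-1}$. These are values at $\epsilon_i^{\pm1}$ with $\epsilon_i\neq1$, so the hypothesis $\xi_j\neq1$ of Theorem \ref{thm TMBN} holds. Applying that theorem with $\gamma_j=1$ gives
\[
\Bfk_{(n_j)}((\epsilon_{J,k}^{-1}))=(-1)^{d+\sum n_j}\zeta_d\bigl((-n_j);(\epsilon_{J,k})\bigr).
\]
Since $\sum_j n_j=\sum_k l_{J,k}$, combining this with the sign $(-1)^{\sum(e_k+i_k)}$ from Corollary \ref{cor l = 0 formula} gives a total exponent of
\[
d+\sum e_k+\sum i_k+\sum l_{J,k}=\sharp(P_2)+\sum_k(i_k+l_{J,k}),
\]
which matches the statement.

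\textbf{Main obstacle.} The delicate point is bookkeeping of conventions rather than analysis. We must check that $\boldsymbol{\epsilon}_J=(\epsilon_i^{-1})$ enters Corollary \ref{cor l = 0 formula} as $\epsilon_{J,k}^{-1}$. After the inversion in Theorem \ref{thm TMBN}, it must reappear in $\zeta$ as $\epsilon_{J,k}$, as written in the statement. We must also confirm that the depth-merging $\circ_{e}$ notation is compatible with the generating function $\Hcal_r$ used to define the TMBN, i.e.\ that a merged block of parameters $(1-e_k)$ means dropping the corresponding factor. I would verify this on $r=1$ and $r=2$ first, comparing against Corollary \ref{cor r = 1 formula}.
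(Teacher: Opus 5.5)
Your proposal is correct and follows the paper's proof. You substitute Corollary \ref{cor l = 0 formula}, with data $(\mathbf{l}_J,\boldsymbol{\epsilon}_J,\boldsymbol{\kappa}_J)$, into Theorem \ref{thm CMBN}, and then convert each TMBN into a $\zeta$-value via Theorem \ref{thm TMBN}. You also spell out two steps the paper leaves implicit: collapsing the $\xi$-sum into the Kronecker delta, and checking that the signs combine to $(-1)^{\sharp(P_2)+\sum_k(i_k+l_{J,k})}$. Both are done correctly.
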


\begin{proof}
  Substituting Corollary \ref{cor l = 0 formula} into Theorem \ref{thm CMBN}, we obtain
  \begin{multline*}
    \sum_{{\bf l}=(l_{i})_{r} \in \mathbb{N}_{0}^{r}} \sum_{\substack{ \epsilon \in \mu_{c} \\ (\epsilon_{i})_{r} \in (\mu_{c} \setminus \{1\})^{r}}} \sum_{J=(P_{1},P_{2},P_{3}) \in E_{r}} \sum_{\substack{ \xi \in \mu_{c} \\ \prod_{j \in P_{3}} \left(\frac{\epsilon_{j}^{(P_{2,3})}}{\epsilon_{j}}\right)^{-\kappa'_{j}} \xi = \epsilon}} \\
    \frac{\epsilon}{\prod_{i=1}^{r}(1 - \epsilon_{i})}  \prod_{i=1}^{r} {-n_{i} \choose l_{i}} \sum_{(e_i)_{\sharp(P_2)} \in \{0\}\times \{0, 1\}^{\sharp(P_2)-1}} \sum_{i_1 = 0}^{1-e_1} \cdots \sum_{i_{\sharp(P_2)} = 0}^{1-e_{\sharp(P_2)}} (-1)^{\sum_{k = 1}^{\sharp(P_2)} (e_k + i_k)}\\
     \Bfk_{\left(l_{J, 1} \circ_{e_2} \cdots \circ_{e_{\sharp(P_2)}} l_{J, \sharp(P_2)}\right)} (\epsilon_{J, 1}^{-1} (1 - e_1) \circ_{e_2} \cdots \circ_{e_{\sharp(P_2)}} \epsilon_{J, \sharp(P_2)}^{-1} (1 - e_{\sharp(P_2)})) \\
     \delta_{\xi, \prod_{k = 1}^{\sharp(P_2)} \epsilon_{J, k}^{-(\kappa_{J, k-1} + i_k)}} \sum_{\delta \in \Delta(T_{r,J})}  \Hfk({\bf w}({\bf l})_{\delta})^{\Frob^{-1}}.
  \end{multline*}
   Applying Theorem \ref{thm TMBN} then completes the proof.
\end{proof}

The following corollary gives the explicit formula in the case of $r=1$.
\begin{cor}\label{cor depth 1}
  For any $n \in \NN$, the family
  $$\left( p^{n}L_{p,1}(n; \omega^{-n}; 1; c)\right)_{p \in \Pcal_{c}}$$
  is expressed as
  $$\sum_{l = 0}^{\infty} \sum_{\epsilon \in \mu_{c} \setminus \{1\}} {-n \choose l} (-1)^{1 + l} \zeta_{1}\left(-l; \epsilon^{-1}; 1\right) \Hfk(n + l, \epsilon^{-1})^{\Frob^{-1}}.$$
\end{cor}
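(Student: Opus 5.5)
We specialise Theorem \ref{thm Main thm} to $r=1$, or equivalently substitute Corollary \ref{cor r = 1 formula} (with $l=0$) into Theorem \ref{thm CMBN} and then apply Theorem \ref{thm TMBN}. For $r=1$ the set $E_1$ has a single element $J=(\emptyset,\{1\},\emptyset)$, since $1\in P_2$ is forced. Thus $P_3=\emptyset$, and the product over $j\in P_3$ is empty, so the constraint on $\xi$ reads $\xi=\epsilon$. We also have $\mathbf{l}_J=(l_1)=(l)$, $\boldsymbol{\epsilon}_J=(\epsilon_1^{-1})$, and $\boldsymbol{\kappa}_J=\emptyset$.

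The first step is to compute the coefficient $\Bcal_{0,\xi}^{(l;\epsilon_1^{-1};\emptyset)}$. By Corollary \ref{cor r = 1 formula} with its $l$ set to $0$, and with the root of unity there equal to $\epsilon_1^{-1}$, we get
\[
\Bcal_{0,\xi}=\Bfk_l(\epsilon_1)\delta_{\xi,1}-\Bfk_l(\epsilon_1)\delta_{\xi,\epsilon_1}.
\]
Only the $m=0$ term survives in the subtracted sum. Theorem \ref{thm TMBN} gives $\Bfk_l(\epsilon_1)=(-1)^{1+l}\zeta_1(-l;\epsilon_1^{-1};1)$. This is legitimate because $\epsilon_1\ne1$.

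The second step is to evaluate the sum over $\epsilon=\xi$. The two Kronecker deltas pick out $\xi=1$ and $\xi=\epsilon_1$, weighted by the factor $\epsilon/(1-\epsilon_1)$. The combined contribution is
\[
\frac{1-\epsilon_1}{1-\epsilon_1}\,\Bfk_l(\epsilon_1)=\Bfk_l(\epsilon_1).
\]
This cancellation of the denominator $1-\epsilon_1$ is the key simplification in the proof.

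The third step is to identify the harmonic part $\sum_{\delta\in\Delta(T_{1,J})}\Hfk(\mathbf{w}(l)_\delta)^{\Frob^{-1}}$. Here $T_{1,J}=[1,p-1]$ and $P_1'\cup P_3'=\emptyset$. The decomposition is therefore trivial, and the preimage under the projection consists of the single quasi-simplex $\delta_{1,(\{1\})}$. By definition, $\mathbf{n}(l)_\delta=(n+l)$ and $\epsilon(l)_\delta=(\epsilon_2/\epsilon_1)=(\epsilon_1^{-1})$, using the convention $\epsilon_2=1$. Hence the harmonic part equals $\Hfk(n+l;\epsilon_1^{-1})^{\Frob^{-1}}$.

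Renaming $\epsilon_1$ as $\epsilon$ then yields the stated formula. The main point requiring care is this last identification of $\Delta(T_{1,J})$ with the single quasi-simplex. One must check that the projection onto the empty set leaves exactly one quasi-simplex $\delta_{1,A}$ covering $\{1\}$; the only ordered partition of $\{1\}$ is $A=(\{1\})$, so this holds.
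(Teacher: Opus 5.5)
Your proposal is correct and is essentially the paper's own argument. The paper specializes Theorem \ref{thm Main thm} to $r=1$: it splits over $i_1\in\{0,1\}$, i.e.\ $\epsilon\in\{1,\epsilon_1\}$, and cancels $\frac{1}{1-\epsilon_1}-\frac{\epsilon_1}{1-\epsilon_1}=1$. You obtain the same two-term coefficient from Corollary \ref{cor r = 1 formula} at $l=0$ together with Theorem \ref{thm TMBN}, which is exactly how Theorem \ref{thm Main thm} was derived; your explicit check that $\Delta(T_{1,J})$ is the single quasi-simplex $\delta_{1,(\{1\})}$, giving $\Hfk(n+l,\epsilon_1^{-1})^{\Frob^{-1}}$, fills in a step the paper only asserts.
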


\begin{proof}
  Substituting $r = 1$ into Theorem \ref{thm Main thm}, we get
  \begin{multline*}
    \sum_{l_1 = 0}^{\infty} \sum_{\substack{ \epsilon \in \mu_{c} \\ \epsilon_1 \in \mu_{c} \setminus \{1\}}} \sum_{J\in E_1=\{(\emptyset,\{1\},\emptyset)\}} \\
    \frac{\epsilon}{(1 - \epsilon_{1})}  {-n_{1} \choose l_{1}} \sum_{(e_i)_{\sharp(P_2)} \in \{0\}\times \{0, 1\}^{\sharp(P_2)-1}} \sum_{i_1 = 0}^{1-e_1} \cdots \sum_{i_{\sharp(P_2)} = 0}^{1-e_{\sharp(P_2)}} (-1)^{\sharp(P_2) + \sum_{k = 1}^{\sharp(P_2)} (i_k + l_{J, k})}\\
    \zeta_{\sharp(P_2)-\sum_{k = 1}^{\sharp(P_2)}e_k}\left( (-l_{J, 1} \circ_{e_2} \cdots \circ_{e_{\sharp(P_2)}} -l_{J, \sharp(P_2)}); (\epsilon_{J, 1} (1 - e_1) \circ_{e_2} \cdots \circ_{e_{\sharp(P_2)}} \epsilon_{J, \sharp(P_2)} (1 - e_{\sharp(P_2)})) \right) \\
    \delta_{\epsilon \prod_{j \in P_{3}} \left(\frac{\epsilon_{j}^{(P_{2,3})}}{\epsilon_{j}}\right)^{\kappa'_{j}}, \prod_{k = 1}^{\sharp(P_2)} \epsilon_{J, k}^{-(\kappa_{J, k-1} + i_k)}} \sum_{\delta \in \Delta(T_{1,J})}  \Hfk({\bf w}(l_1)_{\delta})^{\Frob^{-1}}.
  \end{multline*}
  Note that $J = (\emptyset,\{1\},\emptyset)$ and $e_{\sharp(P_2)} = e_1 = 0$. Then we have
  $$\sum_{l_1 = 0}^{\infty} \sum_{\substack{ \epsilon \in \mu_{c} \\ \epsilon_1 \in \mu_{c} \setminus \{1\}}} \frac{\epsilon}{(1 - \epsilon_{1})} {-n_{1} \choose l_{1}} \sum_{i_1 = 0}^{1} (-1)^{1 + i_1 + l_{1}} \zeta_{1}\left(-l_{1}; \epsilon_1^{-1}; 1\right) \delta_{\epsilon, \epsilon_{1}^{i_1}} \sum_{\delta \in \Delta(T_{1,(\emptyset,\{1\},\emptyset)})}  \Hfk({\bf w}(l_1)_{\delta})^{\Frob^{-1}}.$$
  When $i_1 = 0$, we obtain
  $$\sum_{l_1 = 0}^{\infty} \sum_{\epsilon_1 \in \mu_{c} \setminus \{1\}} \frac{1}{(1 - \epsilon_{1})} {-n_{1} \choose l_{1}} (-1)^{1 + l_{1}} \zeta_{1}\left(-l_{1}; \epsilon_1^{-1}; 1\right) \sum_{\delta \in \Delta(T_{1,(\emptyset,\{1\},\emptyset)})}  \Hfk({\bf w}(l_1)_{\delta})^{\Frob^{-1}}.$$
  When $i_1 = 1$, we obtain
  $$\sum_{l_1 = 0}^{\infty} \sum_{\epsilon_1 \in \mu_{c} \setminus \{1\}} \frac{-\epsilon_{1}}{(1 - \epsilon_{1})} {-n_{1} \choose l_{1}} (-1)^{1 + l_{1}} \zeta_{1}\left(-l_{1}; \epsilon_1^{-1}; 1\right) \sum_{\delta \in \Delta(T_{1,(\emptyset,\{1\},\emptyset)})}  \Hfk({\bf w}(l_1)_{\delta})^{\Frob^{-1}}.$$
  Combining the two terms above, we obtain
  $$\sum_{l_1 = 0}^{\infty} \sum_{\epsilon_1 \in \mu_{c} \setminus \{1\}} {-n_{1} \choose l_{1}} (-1)^{1 + l_{1}} \zeta_{1}\left(-l_{1}; \epsilon_1^{-1}; 1\right) \sum_{\delta \in \Delta(T_{1,(\emptyset,\{1\},\emptyset)})}  \Hfk({\bf w}(l_1)_{\delta})^{\Frob^{-1}}.$$
  Since
  $$\sum_{\delta \in \Delta(T_{1,(\emptyset,\{1\},\emptyset)})}  \Hfk({\bf w}(l_1)_{\delta})^{\Frob^{-1}} = \Hfk(n_1 + l_1, \epsilon_1^{-1})^{\Frob^{-1}},$$
  the corollary follows.
\end{proof}

The following corollary gives the explicit formula in the case of $r=2$.
\begin{cor}\label{cor depth 2}
  For any $(n_1, n_2) \in \NN^2$, the family
  $$\left( p^{n_1 + n_2}L_{p,2}((n_1, n_2); (\omega^{-n_{1}}, \omega^{-n_{2}}); (1, 1); c)\right)_{p \in \Pcal_{c}}$$
  can be expressed as
  \begin{multline*}
    \sum_{(l_1, l_2) \in \mathbb{N}_{0}^{2}} \sum_{(\epsilon_{1}, \epsilon_{2}) \in (\mu_{c} \setminus \{1\})^{2}} {-n_{1} \choose l_{1}} {-n_{2} \choose l_{2}} (-1)^{l_{1} + l_{2}} \\
    \left\{ \zeta_{2}\left( (-l_{1}, -l_{2}); (\epsilon_1^{-1}, \epsilon_2^{-1}) \right) \left( \Hfk((n_1 + l_1, n_2 + l_2), (\epsilon_1^{-1}, \epsilon_2^{-1}))^{\Frob^{-1}} + \Hfk(n_1 + l_1 + n_2 + l_2, \epsilon_1^{-1})^{\Frob^{-1}} \right) \right. \\
    + \left. \left( \zeta_{2}\left( (-l_{1}, -l_{2}); (\epsilon_1^{-1}, \epsilon_2^{-1}) \right) + \zeta_{1}\left( - l_{1} - l_{2}; \epsilon_1^{-1} \right) \right) \Hfk((n_2 + l_2, n_1 + l_1), (\epsilon_1^{-1}, \epsilon_1^{-1} \epsilon_2))^{\Frob^{-1}} \right\}. \\
  \end{multline*}
\end{cor}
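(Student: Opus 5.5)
I will specialise Theorem \ref{thm Main thm} to $r=2$ and evaluate everything explicitly, in the same way Corollary \ref{cor depth 1} was obtained from the case $r=1$.

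\textbf{Step 1: the triples $J$.} Since $1\in P_2$, the set $E_2$ has exactly three elements: $J_1=(\emptyset,\{1,2\},\emptyset)$, $J_2=(\{2\},\{1\},\emptyset)$ and $J_3=(\emptyset,\{1\},\{2\})$. For each one I compute the data of Definition \ref{defn J}.
\begin{itemize}
\item For $J_1$: $\mathbf{l}_J=(l_1,l_2)$, $\boldsymbol\epsilon_J=(\epsilon_1^{-1},\epsilon_2^{-1})$, $\boldsymbol\kappa_J=(0)$, and the product over $P_3$ is empty.
\item For $J_2$: $\mathbf{l}_J=(l_1+l_2)$, $\boldsymbol\epsilon_J=(\epsilon_1^{-1})$, and again there is no $P_3$ factor.
\item For $J_3$: $l_1^{(P_{2,3})}=l_1$ and $l_2^{(P_{2,3})}=l_2$, so $\mathbf{l}_J=(l_1+l_2)$ and $\boldsymbol\epsilon_J=(\epsilon_1^{-1})$. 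Here $\kappa'_2=1$ and $\epsilon_2^{(P_{2,3})}=1$, so the $P_3$ factor is $\epsilon_2^{-1}$.
\end{itemize}

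\textbf{Step 2: the $\epsilon$- and $i$-sums.} For each $J$ I expand the sums over $\mathbf e$ and the $i_k$, then use the Kronecker delta to perform the sum over $\epsilon$. This is the same telescoping as in depth $1$: the terms with $i_k=0$ and $i_k=1$ combine with the prefactor $\epsilon/\prod_i(1-\epsilon_i)$.

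For $J_1$ the delta forces $\epsilon=\epsilon_1^{-i_1}\epsilon_2^{-i_2}$. I expect the choice $\mathbf e=(0,0)$ to yield $\zeta_2((-l_1,-l_2);(\epsilon_1^{-1},\epsilon_2^{-1}))$. The choice $\mathbf e=(0,1)$ yields $\zeta_1(-l_1-l_2;\epsilon_1^{-1})$, because the $\circ_1=+$ merges the two entries.

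I must check that the $i$-sums cancel the factor $1/\prod_i(1-\epsilon_i)$. A subtlety is that the delta gives $\epsilon=\epsilon_1^{-i_1}\dots$, which is not $\epsilon_1^{i_1}$ as the depth-$1$ manipulation suggests. So I will have to treat the sign and convention of the exponent carefully, possibly reindexing $\epsilon_i\mapsto\epsilon_i^{-1}$ in the outer sum over $(\epsilon_1,\epsilon_2)\in(\mu_c\setminus\{1\})^2$.

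$J_2$ and $J_3$ are handled similarly. The overall sign $(-1)^{\sharp P_2+\sum i_k+\sum l_{J,k}}$ should reduce to $(-1)^{l_1+l_2}$ after the cancellation.

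\textbf{Step 3: the quasi-simplex sums.} I determine $\Delta(T_{2,J})$ for each $J$.
\begin{itemize}
\item For $J_1$, $P_1'\cup P_3'=\emptyset$, so the projection imposes nothing. The preimage is all quasi-simplices on $\{1,2\}$: $t_1<t_2$, $t_1=t_2$ and $t_1>t_2$. These give $\Hfk((n_1+l_1,n_2+l_2),\cdot)$, $\Hfk(n_1+l_1+n_2+l_2,\cdot)$ and the swapped-order value.
\item For $J_2$, $P_1'=\{1,2\}$ and $T$ is the region $t_1\le t_2$, so we get the first two simplices.
\item For $J_3$, $T$ is the region $t_1>t_2$, so we get the reversed simplex.
\end{itemize}
For each simplex I compute $\mathbf w(\mathbf l)_\delta$ from the definition, using $\epsilon_3=1$.

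Finally I collect terms: the coefficient of the $t_1>t_2$ simplex should be the sum of $\zeta_2$ (from $J_1$) and $\zeta_1$ (from $J_3$). The other two simplices should carry only $\zeta_2$, because the $\zeta_1$ contributions from $J_1$ and $J_2$ ought to cancel.

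\textbf{Main obstacle.} The hard part is the bookkeeping in Step 2 together with the $\epsilon$-arguments of the CMHVs in Step 3. The delta conditions mix $\epsilon_1$ and $\epsilon_2$ to different powers, and the final formula has specific arguments such as $(\epsilon_1^{-1},\epsilon_1^{-1}\epsilon_2)$. Reaching those will likely require a change of variables in the sum over $(\epsilon_1,\epsilon_2)$ combined with the cancellation of the factor $1/\prod(1-\epsilon_i)$. I would first verify the cancellations term by term for each $(J,\mathbf e)$ before assembling the result.
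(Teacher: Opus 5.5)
Your route is the paper's: specialise Theorem~\ref{thm Main thm} to $r=2$, treat the three $J\in E_2$ separately, and collect by simplex. Your Steps~1 and~3 are correct. Step~2, however, is set up wrongly in two ways.

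First, the ``subtlety'' you flag is a misreading. The delta compares with $\prod_k\epsilon_{J,k}^{-(\kappa_{J,k-1}+i_k)}$, and $\boldsymbol\epsilon_J=(\epsilon_i^{-1})_{i\in P_2}$, so the two inversions cancel. The delta therefore forces $\epsilon=\epsilon_1^{i_1}\epsilon_2^{i_2}$ for $J_1$ (note $\kappa_{J,1}=0$), $\epsilon=\epsilon_1^{i_1}$ for $J_2$, and $\epsilon=\epsilon_2\epsilon_1^{i_1}$ for $J_3$, exactly as in depth~$1$. With your reading, the $(J_1,\mathbf e=(0,0))$ term would leave the factor $(1-\epsilon_1^{-1})(1-\epsilon_2^{-1})/((1-\epsilon_1)(1-\epsilon_2))=(\epsilon_1\epsilon_2)^{-1}$. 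Relabelling $\epsilon_i\mapsto\epsilon_i^{-1}$ cannot remove it, since it inverts this factor together with the arguments of $\zeta$ and $\Hfk$. No change of variables is needed for $(\epsilon_1^{-1},\epsilon_1^{-1}\epsilon_2)$ either: it is read off from $\bm{w}(\bm{l})_\delta$ for $\delta=\delta_{2,(\{2\},\{1\})}$ with $\epsilon_3=1$.

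Second, and more seriously, your model of the cancellation is wrong. Only for $(J_1,\mathbf e=(0,0))$ do the $i$-sums kill the whole prefactor $1/((1-\epsilon_1)(1-\epsilon_2))$. In the other three terms only $i_1$ is free ($i_2=0$ when $e_2=1$, and $\sharp P_2=1$ for $J_2,J_3$). So only $1-\epsilon_1$ cancels, and for $J_2,J_3$ the sign is $(-1)^{1+l_1+l_2}$, not $(-1)^{l_1+l_2}$. Write $C=(-1)^{l_1+l_2}\binom{-n_1}{l_1}\binom{-n_2}{l_2}$ and $Z_1=\zeta_1(-l_1-l_2;\epsilon_1^{-1})$. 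Let $H_{<},H_{=},H_{>}$ be the CMHVs attached to $t_1<t_2$, $t_1=t_2$, $t_1>t_2$. The $\zeta_1$-terms are then
\[
\begin{aligned}
&J_1,\ \mathbf e=(0,1):\quad \tfrac{1}{1-\epsilon_2}\,C Z_1\,(H_{<}+H_{=}+H_{>}),\\
&J_2:\quad -\tfrac{1}{1-\epsilon_2}\,C Z_1\,(H_{<}+H_{=}),\\
&J_3:\quad -\tfrac{\epsilon_2}{1-\epsilon_2}\,C Z_1\,H_{>}.
\end{aligned}
\]
If every term came out with the prefactor cancelled and sign $(-1)^{l_1+l_2}$, as you predict, the $\zeta_1$-contributions of $J_1$ and $J_2$ on $H_{<},H_{=}$ would add rather than cancel. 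The factor $1-\epsilon_2$ disappears only through a cross-$J$ identity after collecting by simplex. The $Z_1$-coefficient of $H_{<}+H_{=}$ is $\frac{1}{1-\epsilon_2}-\frac{1}{1-\epsilon_2}=0$, and that of $H_{>}$ is $\frac{1-\epsilon_2}{1-\epsilon_2}=1$. So the $\zeta_1$ on the $t_1>t_2$ simplex comes from $J_1$ and $J_3$ jointly, not from $J_3$ alone. Together with $C\,\zeta_2\,(H_{<}+H_{=}+H_{>})$ from $(J_1,\mathbf e=(0,0))$, this gives the statement. This identity is the content of the paper's final combining step, and your Step~2 does not supply it.
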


\begin{proof}
  Substituting $r = 2$ into Theorem \ref{thm Main thm}, we get
  \begin{multline*}
    \sum_{(l_1, l_2) \in \mathbb{N}_{0}^{2}} \sum_{\substack{ \epsilon \in \mu_{c} \\ (\epsilon_{1}, \epsilon_{2}) \in (\mu_{c} \setminus \{1\})^{2}}} \sum_{J=(P_{1},P_{2},P_{3}) \in \{(\{2\}, \{1\}, \emptyset), (\emptyset, \{1, 2\}, \emptyset), (\emptyset, \{1\}, \{2\})\}} \\
    \frac{\epsilon}{(1 - \epsilon_{1})(1 - \epsilon_{2})} {-n_{1} \choose l_{1}} {-n_{2} \choose l_{2}} \sum_{(e_i)_{\sharp(P_2)} \in \{0\}\times \{0, 1\}^{\sharp(P_2)-1}} \sum_{i_1 = 0}^{1-e_1} \cdots \sum_{i_{\sharp(P_2)} = 0}^{1-e_{\sharp(P_2)}} (-1)^{\sharp(P_2) + \sum_{k = 1}^{\sharp(P_2)} (i_k + l_{J, k})}\\
    \zeta_{\sharp(P_2)-\sum_{k = 1}^{\sharp(P_2)}e_k}\left( (-l_{J, 1} \circ_{e_2} \cdots \circ_{e_{\sharp(P_2)}} -l_{J, \sharp(P_2)}); (\epsilon_{J, 1} (1 - e_1) \circ_{e_2} \cdots \circ_{e_{\sharp(P_2)}} \epsilon_{J, \sharp(P_2)} (1 - e_{\sharp(P_2)})) \right) \\
    \delta_{ \epsilon \prod_{j \in P_{3}} \left(\frac{\epsilon_{j}^{(P_{2,3})}}{\epsilon_{j}}\right)^{\kappa'_{j}}, \prod_{k = 1}^{\sharp(P_2)} \epsilon_{J, k}^{-(\kappa_{J, k-1} + i_k)}} \sum_{\delta \in \Delta(T_{2,J})}  \Hfk({\bf w}({\bf l})_{\delta})^{\Frob^{-1}}.
  \end{multline*}
  We consider the three possible cases for $J$ as follows:
  \begin{enumerate}[\text{Case }1:]
    \item $J = (\{2\}, \{1\}, \emptyset)$: by Definition \ref{defn J}, we have
    $$l_{J, 1} = l_1 + l_2 \text{ and } \epsilon_{J, 1} = \epsilon_{1}^{-1}.$$
    In this case $\sharp(P_2) = 1$. Using the same calculation as in the proof of Corollary \ref{cor depth 1}, we obtain
    \begin{multline*}
      \sum_{(l_1, l_2) \in \mathbb{N}_{0}^{2}} \sum_{(\epsilon_{1}, \epsilon_{2}) \in (\mu_{c} \setminus \{1\})^{2}} \\
      \frac{1}{1 - \epsilon_{2}} {-n_{1} \choose l_{1}} {-n_{2} \choose l_{2}} (-1)^{1 + l_{1} + l_{2}} \zeta_{1}\left(-(l_{1} + l_{2}); \epsilon_1^{-1} \right)\\
      \sum_{\delta \in \Delta(T_{2, (\{2\}, \{1\}, \emptyset)})} \Hfk({\bf w}({\bf l})_{\delta})^{\Frob^{-1}}.
    \end{multline*}
    Moreover,
    $$\sum_{\delta \in \Delta(T_{2, (\{2\}, \{1\}, \emptyset)})} \Hfk({\bf w}({\bf l})_{\delta})^{\Frob^{-1}} = \Hfk((n_1 + l_1, n_2 + l_2), (\epsilon_1^{-1}, \epsilon_2^{-1}))^{\Frob^{-1}} + \Hfk(n_1 + l_1 + n_2 + l_2, \epsilon_1^{-1})^{\Frob^{-1}}.$$
    \item $J = (\emptyset, \{1, 2\}, \emptyset)$: by Definition \ref{defn J}, we have
    $$l_{J, 1} = l_1, \ l_{J, 2} = l_2, \ \epsilon_{J, 1} = \epsilon_{1}^{-1}, \ \epsilon_{J, 2} = \epsilon_{2}^{-1}, \text{ and } \kappa_1 = 0.$$
    In this case $\sharp(P_2) = 2$. Using the same calculation as in the proof of Corollary \ref{cor depth 1}, we obtain
    \begin{multline*}
      \sum_{(l_1, l_2) \in \mathbb{N}_{0}^{2}} \sum_{(\epsilon_{1}, \epsilon_{2}) \in (\mu_{c} \setminus \{1\})^{2}} \\
      {-n_{1} \choose l_{1}} {-n_{2} \choose l_{2}} (-1)^{l_{1} + l_{2}} \zeta_{2}\left( (-l_{1}, -l_{2}); (\epsilon_1^{-1}, \epsilon_2^{-1}) \right)\\
      \sum_{\delta \in \Delta(T_{2, (\emptyset, \{1, 2\}, \emptyset)})} \Hfk({\bf w}({\bf l})_{\delta})^{\Frob^{-1}}
    \end{multline*}
    when $e_2 = 0$, and
    \begin{multline*}
      \sum_{(l_1, l_2) \in \mathbb{N}_{0}^{2}} \sum_{(\epsilon_{1}, \epsilon_{2}) \in (\mu_{c} \setminus \{1\})^{2}} \\
      {-n_{1} \choose l_{1}} {-n_{2} \choose l_{2}} (-1)^{l_{1} + l_{2}} \frac{1}{1 - \epsilon_{2}} \zeta_{1}\left( - l_{1} - l_{2}; \epsilon_1^{-1} \right)\\
      \sum_{\delta \in \Delta(T_{2, (\emptyset, \{1, 2\}, \emptyset)})} \Hfk({\bf w}({\bf l})_{\delta})^{\Frob^{-1}}.
    \end{multline*}
    when $e_2 = 1$.
    We have
    \begin{align*}
      \sum_{\delta \in \Delta(T_{2, (\emptyset, \{1, 2\}, \emptyset)})} \Hfk({\bf w}({\bf l})_{\delta})^{\Frob^{-1}} = & \Hfk((n_1 + l_1, n_2 + l_2), (\epsilon_1^{-1}, \epsilon_2^{-1}))^{\Frob^{-1}} \\
      & + \Hfk(n_1 + l_1 + n_2 + l_2, \epsilon_1^{-1})^{\Frob^{-1}} \\
      & + \Hfk((n_2 + l_2, n_1 + l_1), (\epsilon_1^{-1}, \epsilon_1^{-1} \epsilon_2))^{\Frob^{-1}}.
    \end{align*}
    \item $J = (\emptyset, \{1\}, \{2\})$: by Definition \ref{defn J}, we have
    $$l_{J, 1} = l_1 + l_2, \ \epsilon_{J, 1} = \epsilon_{1}^{-1} \text{ and } \kappa'_2 = 1.$$
    In this case $\sharp(P_2) = 1$. Using the same calculation as in the proof of Corollary \ref{cor depth 1}, we obtain
    \begin{multline*}
      \sum_{(l_1, l_2) \in \mathbb{N}_{0}^{2}} \sum_{(\epsilon_{1}, \epsilon_{2}) \in (\mu_{c} \setminus \{1\})^{2}}\\
      \frac{\epsilon_{2}}{1 - \epsilon_{2}} {-n_{1} \choose l_{1}} {-n_{2} \choose l_{2}} (-1)^{1 + l_{1} + l_{2}} \zeta_{1}\left( - l_{1} - l_{2}; \epsilon_1^{-1} \right) \\
      \sum_{\delta \in \Delta(T_{2, (\emptyset, \{1\}, \{2\})})}  \Hfk({\bf w}({\bf l})_{\delta})^{\Frob^{-1}}.
    \end{multline*}
    Moreover,
    $$\sum_{\delta \in \Delta(T_{2, (\emptyset, \{1\}, \{2\})})}  \Hfk({\bf w}({\bf l})_{\delta})^{\Frob^{-1}} = \Hfk((n_2 + l_2, n_1 + l_1), (\epsilon_1^{-1}, \epsilon_1^{-1} \epsilon_2))^{\Frob^{-1}}.$$
  \end{enumerate}
  Combining all the terms above, we obtain
  \begin{multline*}
    \sum_{(l_1, l_2) \in \mathbb{N}_{0}^{2}} \sum_{(\epsilon_{1}, \epsilon_{2}) \in (\mu_{c} \setminus \{1\})^{2}} {-n_{1} \choose l_{1}} {-n_{2} \choose l_{2}} (-1)^{l_{1} + l_{2}} \\
    \left\{ \zeta_{2}\left( (-l_{1}, -l_{2}); (\epsilon_1^{-1}, \epsilon_2^{-1}) \right) \left( \Hfk((n_1 + l_1, n_2 + l_2), (\epsilon_1^{-1}, \epsilon_2^{-1}))^{\Frob^{-1}} + \Hfk(n_1 + l_1 + n_2 + l_2, \epsilon_1^{-1})^{\Frob^{-1}} \right) \right. \\
    + \left. \left( \zeta_{2}\left( (-l_{1}, -l_{2}); (\epsilon_1^{-1}, \epsilon_2^{-1}) \right) + \zeta_{1}\left( - l_{1} - l_{2}; \epsilon_1^{-1} \right) \right) \Hfk((n_2 + l_2, n_1 + l_1), (\epsilon_1^{-1}, \epsilon_1^{-1} \epsilon_2))^{\Frob^{-1}} \right\}.
  \end{multline*}
  This proves the corollary.
\end{proof}

\end{document}